\renewcommand{\geq}{\geqslant}
\renewcommand{\leq}{\leqslant}
\numberwithin{equation}{section}
\numberwithin{figure}{section}
\newcommand{\rr}{\mathbb{R}}
\newcommand{\be}{\begin{eqnarray*}}
\newcommand{\bel}{\begin{eqnarray}}
\newcommand{\ee}{\end{eqnarray*}}
\newcommand{\eel}{\end{eqnarray}}
\newcommand{\ba}{\begin{aligned}}
\newcommand{\ea}{\end{aligned}}
\newcommand{\de}{\Delta}
\newcommand{\na}{\nabla}
\newcommand{\ep}{\epsilon}
\newcommand{\ra}{\rightarrow}
\newcommand{\pa}{\partial}
\newcommand{\ds}{\displaystyle}
\newcommand{\n}[1]{|#1|}
\def\hf{\frac{1}{2}}
\def\x2+{{y}_+}
\def\y+{{y}_+}
\def\M+{M_+}
\def\V+{V_+}
\def\G+{\Gamma_+}
\def\Gm{\Gamma_-}
\def\Gc{\Gamma_0}
\def\S+{S_+}
\def\Sm{S_-}
\def\Sc{S_0}
\def\Gw{\Gamma_\omega}
\def\T{{\mathcal I}}  
\newcommand{\bb}{{\mathbf b}}
\newcommand{\myr}[1]{}
\newcommand{\myb}[1]{}
\newtheorem{thm}{Theorem}[section]
\newtheorem{defn}{Definition}[section]
\newtheorem{lem}{Lemma}[section]
\newtheorem{pro}{Proposition}[section]
\theoremstyle{remark}
\newtheorem{rmk}{Remark}[section]
\begin{document}

\title[Suppressing chemotactic blow-up through a fast splitting scenario]{Suppressing chemotactic blow-up through\\a fast splitting scenario on the plane}

\author{Siming He}
\address{Department of Mathematics and Center for Scientific Computation and Mathematical Modeling (CSCAMM), University of Maryland, College Park}
\email{simhe@cscamm.umd.edu}

\author{Eitan Tadmor}
\address{Department of Mathematics, Center for Scientific Computation and Mathematical Modeling (CSCAMM), and Institute for Physical Sciences \& Technology (IPST), University of Maryland, College Park}
\email{tadmor@cscamm.umd.edu}

\date{October 21, 2017} 

\subjclass{}

\keywords{}


\begin{abstract}
We revisit the question of global regularity for the  Patlak-Keller-Segel (PKS) chemotaxis model. The classical 2D parabolic-elliptic model  blows up for initial mass $M>8\pi$. We consider more realistic scenario which takes into account the flow of the ambient environment induced by \emph{harmonic} potentials, and thus retain the identical elliptic structure as in the original PKS. Surprisingly, we find that already  the simplest case of \emph{linear} stationary vector field, $Ax^\top$, with large enough amplitude $A$, prevents chemotactic blow-up. Specifically, the presence of such an ambient fluid transport creates what we call a  'fast splitting scenario', which competes with the focusing effect of aggregation so that 'enough mass' is pushed away from concentration along the $x_1$-axis, thus avoiding  a finite time blow-up, at least for $M<16\pi$. Thus, the enhanced ambient flow  doubles  the amount of allowable mass which evolve to global smooth solutions.
\end{abstract}

\maketitle

\tableofcontents

\section{Introduction}
The Patlak-Keller-Segel (PKS) model describes the time evolution of  colony of bacteria with density $n(x,t)$ subject to two competing mechanisms --- aggregation triggered by the concentration of chemo-attractant driven by velocity field ${\mathbf u}:=\nabla(-\Delta)^{-1} n(\cdot,t)$, and  diffusion due to run-and-tumble effects,
\[
n_t + \nabla\cdot(n{\mathbf u})= \Delta n, \qquad {\mathbf u}:=\nabla (-\Delta)^{-1}n.
\]
We focus on the two-dimensional case where $(-\Delta)^{-1} n$ takes the general form of a fundamental solution together with an arbitrary harmonic function
\[
(-\Delta)^{-1}n(x,t) = (K*n)(x,t)+ H(x,t), \qquad K(x):= -\frac{1}{2\pi} \log |x|, \ \ \Delta H(\cdot,t)\equiv 0.
\]
The resulting PKS equation then reads
\bel\label{KS advection}
\frac{\pa n}{\pa t}+\na \cdot(n \na c)+\bb\cdot\na n=\de n,\quad\textcolor{blue}{c=K*n,} \quad x=(x_1,x_2)\in \rr^2,
\eel
subject to prescribed initial conditions $n(x,0)=n_0(x)$.
Here the divergence free vector field $\bb(\cdot)$  represents the environment of \textcolor{blue}{a} background fluid transported with velocity $\bb(x,t):=\nabla H(x,t)$. When $\bb\equiv 0$, the system is the classical parabolic-elliptic PKS equation modeling chemotaxis in a \emph{static} environment \cite{Pat1953,KS1970}.
We recall the large literature on the static case $\bb=0$, referring the interested reader to the review \cite{Hor2003} and the follow-up representative works \cite{Bil1995,BDP2006,BCM2008,BCC2008,CC2006,CC2008,CR2010,JL1992,BM2014,BK2010}. It is well-known that the large-time behavior of the static case \eqref{KS advection}${}_{\bb\equiv 0}$ depends on whether the initial total mass\footnote{We let $|x|$ denote the $\ell^2$-size of vector $x$ and let $\n{f}_p$ denote the $L^p$-norm of a vector function $f(\cdot)$.}$M:=\n{n_0}_1$ crosses the critical threshold of  $8\pi$: the equation admits global smooth solution in the  sub-critical case  $M<8\pi$ and it experiences a finite time blow-up if $M>8\pi$ \cite{BDP2006} (when $M=8\pi$,   aggregation and diffusion exactly balance each other and  solutions with finite second moments form Dirac mass as time approach infinity \cite{BCM2008}).

In this paper we study a more realistic scenario  of the PKS  model \eqref{KS advection}  where we take into account an ambient environment due to the fluid transport by vector field $\bb(\cdot,t)$.  Surprisingly, we find that already  the simplest case of linear stationary vector field, $\bb=A (-x_1,x_2)$, corresponding to $H(x)=\hf A(x^2_2-x^2_1)$, prevents chemotactic blow-up for $M< 16\pi$. As we shall see, the presence of such an ambient fluid transport creates what we call a  'fast splitting scenario'
which competes with the focusing effect of aggregation so that 'enough mass' is able to escape a finite time blow-up, at least for $M<16\pi$. This scenario is likely to be enhanced even further when larger amount of mass can be transported by a more pronounced ambient field $\bb(x,t)=\nabla H(\cdot,t) \gg |x|^q $ at $|x|\gg 1$.

We mention two other scenarios of non-static PKS with strong enough transport preventing blow-up for $M>8\pi$. In \cite{KX2015} the author exploit
relaxation enhancing of a vector field $\bb$ with a large enough amplitude in order to enforces global smooth \textcolor{blue}{solutions}. Here regularity follows due to a \emph{mixing property} over $\mathbb{T}^2$ and $\mathbb{T}^3$.  In \cite{BH2016} it was shown  how to exploit the enhanced dissipation effect of    non-degenerate \emph{shear flow} with large enough amplitude, \cite{BCZ2015}, in order to suppress the blow up in \eqref{KS advection} on $\mathbb{T}^2,\mathbb{T}^3,\mathbb{T}\times\rr,\mathbb{T}\times\rr^2$. It is worth mentioning that the model \eqref{KS advection} is one among many attempts to take into account the underlying fluid transport effect, see, e.g. \cite{L2010},\cite{L2012},\cite{LL2011},\cite{DLM2010},\cite{FLM2010}.

\subsection{A fast splitting scenario}
Here, we exploit yet another mechanism that suppress the possible chemotactic blow up of the equation \eqref{KS advection}, where the underlying fluid flow splits the population of bacteria with density $n$ exponentially fast,  resulting in several isolated subgroups with mass smaller than the critical $8\pi$. In this manner, an  initial total mass  greater than $8\pi$ is able to escape the finite-time concentration of Dirac mass. This provides a first no blow-up scenario over $\rr^2$, at least for $M$ up to $16\pi$.

We now fix the vector field driving a hyperbolic flow as the strain flow in \cite{KJCY2017}):
\bel\label{vector field b}
\bb(x):=A(-x_1,x_2).
\eel
Our aim is to show that  a large enough amplitude, $A\gg1$, guarantees the global existence of solution of  PKS \eqref{KS advection} subject to initial mass  $M<16\pi$.  Intuitively, the large enough amplitude $A\gg1$ is required so that  the ambient field $A(-x_1,x_2)$ \emph{`pushes away'  highly concentrated mass near the $x_1-$axis}, namely, $\ds \int_{|x_2|\leq \epsilon} n_0(x) dx \gg1$. With this we can state the main theorem of the paper.

\begin{thm}\label{thm 2}
Consider the PKS equation \eqref{KS advection},\eqref{vector field b} subject to  initial data, $n_0\in H^s(s\geq 2)$ with total mass, $M:=|n_0|_1< 16\pi$, such that $(1+|x|^2)n_0\in L^1(\rr^2)$ and $n_0\log n_0\in L^1(\rr^2)$. Assume $n_0$  is symmetric about the $x_1$-axis, and  that the ``$y$-component'' of its center of mass in the upper half plane
\begin{align*}
\x2+(t):=\frac{1}{\M+}\int_{x_2\geq 0}n(x,t)x_2dx,\quad \M+:=\int_{x_2\geq 0}n(x,t)dx \equiv \frac{M}{2},
\end{align*}
is not too close to the $x_1$-axis in the sense that \myr{$|x_2-\y+|^2$ or should we need $|x-(0,\y+)|^2$???}
\begin{equation}\label{initial configuration}
{\x2+^2}(0)> \frac{2}{\M+}\V+(0), \qquad \V+(t):=\int_{x_2\geq0}n(x,t)|x_2-\x2+|^2dx.
\end{equation}
Then there exists a large enough  amplitude, $A=A(M, \x2+(0), \V+(0))$, such that the weak solution of \eqref{KS advection},\eqref{vector field b} exists for all time and the free energy
\begin{align}
E[n](t)&:=\int \big(n\log n -\frac{1}{2}c n-H(x)n(x,t)\big) dx, \qquad H(x)=\frac{A}{2}(x_2^2-x^2_1), \label{E}
\end{align}
satisfies the dissipation relation
\begin{align}
E[n](t)&+\int_0^t\int_{\rr^2} n|\na \log n- \na c-\bb|^2dxds\leq E[n_0].\label{free energy estimate}
\end{align}
\end{thm}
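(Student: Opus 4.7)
The plan is to exploit the assumed $x_1$-symmetry of $n_0$. Since both the hyperbolic drift $\bb(x)=A(-x_1,x_2)$ and the attractive kernel $K$ are symmetric about the $x_1$-axis, this symmetry is preserved by \eqref{KS advection}, so the upper-half mass is conserved at $\M+\equiv M/2<8\pi$, which is strictly sub-critical for the classical PKS. The whole strategy is to show that the large-amplitude drift keeps the upper and lower half-populations well separated from the $x_1$-axis, so that the full mass $M$ never ``feels'' its supercriticality, and a sub-critical free energy estimate can be closed. The dissipation identity \eqref{free energy estimate} is formal to begin with: rewriting \eqref{KS advection} as $\pa_t n=\na\cdot\big(n(\na\log n-\na c-\bb)\big)$ using $\bb=\na H$ and testing against $\log n-c-H$ gives $\frac{d}{dt}E[n]=-\int n|\na\log n-\na c-\bb|^2dx$; the real content of the theorem is to extract from this a uniform-in-time lower bound on $E[n](t)$.

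The core step is to propagate the initial separation \eqref{initial configuration}. Multiplying \eqref{KS advection} against $x_2\mathbf{1}_{x_2\geq 0}$ and against $|x_2-\y+|^2\mathbf{1}_{x_2\geq 0}$ and integrating, one obtains ODEs for $\y+(t)$ and $\V+(t)$. The symmetry makes the relevant boundary contributions on $\{x_2=0\}$ vanish (in particular $(\na c)_2=0$ and $\pa_{x_2}n=0$ on the axis), the self-aggregation of the upper half contributes zero by antisymmetry in $x\leftrightarrow y$, and the hyperbolic drift contributes $+A\y+$ to $\dot\y+$ and $+2A\V+$ to $\dot\V+$. Consequently, the separation margin $W(t):=\y+^2(t)-\tfrac{2}{\M+}\V+(t)$ satisfies an ODE of the form $\dot W(t)=2A\,W(t)+\mathcal{R}(t)$, where $\mathcal{R}(t)$ collects the contributions from cross-attraction by the mirror half, diffusion, and the axis trace of $n$. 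These remainders are controlled by standard parabolic and log-Sobolev-type inequalities and depend only on $M$, $\y+(0)$, $\V+(0)$, and lower moments. Choosing $A=A(M,\y+(0),\V+(0))$ large enough makes the linear amplification $2AW$ dominate $\mathcal{R}$, so that $W(t)>0$ is propagated from $W(0)>0$.

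With this separation in hand, a lower bound on $E[n](t)$ is obtained by splitting $n=n_++n_-$ with $n_\pm:=n\mathbf{1}_{\pm x_2\geq 0}$ and correspondingly $c=c_++c_-$ with $c_\pm:=K*n_\pm$. The self-interactions $-\hf\int n_\pm c_\pm dx$ are absorbed into $\alpha\int n_\pm\log n_\pm dx$ with some $\alpha<1$ via the logarithmic Hardy-Littlewood-Sobolev inequality, which is available precisely because $\M+<8\pi$. The cross-interaction $-\int n_+(x)n_-(y)\log|x-y|\,dx\,dy$ is bounded using the propagated separation: $W(t)>0$ confines the bulk of $n_+$ above the $x_1$-axis and the bulk of $n_-$ below, so that $|x-y|$ is effectively bounded below on the bulk of the supports of $n_+\otimes n_-$, yielding only a logarithmic loss controlled by the second moment. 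Finally, the potential term $-\int Hn\,dx=\tfrac{A}{2}\int(x_1^2-x_2^2)n\,dx$ is absorbed into the propagated bound on $\int(1+|x|^2)n\,dx$, which follows from a moment ODE (the $x_1$-moment is exponentially damped by the drift, the $x_2$-moment grows at a controlled rate under $A\V+$ domination already established in the previous step).

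Equipped with a uniform lower bound on $E[n](t)$, the dissipation inequality \eqref{free energy estimate} yields a priori control on $\int n\log n$ and on the full dissipation integral, which are the standard ingredients for running a regularize-and-limit scheme to construct a global weak solution satisfying \eqref{free energy estimate}. The hard part of the proof is the separation propagation step: closing the remainder $\mathcal{R}(t)$ demands careful bookkeeping of the axis trace of $n$ and of the sign of the cross-attraction from the mirror half, together with verifying that the threshold for $A$ can be chosen explicitly as a function of $M$, $\y+(0)$, and $\V+(0)$; once this is in place, the remaining ingredients are fairly standard adaptations of the sub-critical PKS machinery.
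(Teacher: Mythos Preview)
Your overall strategy matches the paper's: propagate the separation condition \eqref{initial configuration} using the exponential push of the hyperbolic drift, then combine free-energy dissipation with the logarithmic HLS inequality applied on sub-critical pieces to bound the entropy $\int n\log n$, and finally invoke the regularize-and-limit machinery. However, two steps in your outline are genuine gaps rather than routine details.

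First, your ODE for $\y+$ and $\V+$ cannot be closed as written. The cross-attraction from the mirror half contributes $\int_{\rr^2_+} n_+\,\pa_{x_2}c_-\,dx$, and the pointwise estimate $|\pa_{x_2}c_-(x)|\leq \frac{M_+}{2\pi|x_2|}$ leaves you with $\int_{\rr^2_+} n_+/x_2\,dx$, which is not controlled by moments or log-Sobolev-type bounds when mass sits near the axis. The paper avoids this by replacing $x_2$ with a cutoff test function supported in $\{x_2>\delta\}$; the price is an error of order $M_+^2/\delta$, which is then beaten by the drift contribution precisely because $A$ is chosen as $M_+/\delta^2$. Your claim that ``the remainders are controlled by standard parabolic and log-Sobolev-type inequalities'' does not survive contact with this term.

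Second, and more seriously, your treatment of the cross-interaction $\iint_{\rr^2_+\times\rr^2_-} n_+(x)n_-(y)\log|x-y|\,dxdy$ is the heart of the matter and your argument is incomplete. The propagated bound $W(t)>0$ only gives, via Chebyshev, that the mass in a strip $\{|x_2|\leq 2\delta\}$ is at most a fixed fraction of $M$; it does not prevent $|x-y|\to 0$ there, so $\log|x-y|$ is unbounded below on a set of positive $n_+\otimes n_-$ measure. If you absorb the near-axis interaction back into the entropy via log-HLS on the strip, the resulting coefficient in front of $\int n\log n$ becomes $M_+ + (\text{strip mass})$ rather than $M_+$, and you recover only the \emph{moderate} mass constraint $M<\frac{16\pi}{1+(1+\eta)^2/R^2}$ (the paper's Theorem~\ref{thm 1}), not the full $M<16\pi$. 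The paper closes this gap with an additional pigeonhole step: the $2\delta$-strip is subdivided into $J$ sub-strips, one of which must carry mass $\leq \frac{2}{J}M_+$, and this time-dependent thin sub-strip is used as the ``cell-clear zone'' separating three (overlapping) regions on which log-HLS is applied. The resulting entropy coefficient is $(1+\tfrac{2}{J})M_+$, which is $<8\pi$ for $J=J(M)$ large. Without this refinement your argument stops short of the stated theorem.
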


We conclude the introduction with a series of remarks.
\begin{rmk}[Why large enough stationary field prevents second-moment collapse]
Our main theorem extends the amount of critical mass, so that global regularity of \eqref{KS advection},\eqref{initial configuration}
prevails for  $M<16\pi$, provided $A$ is large enough. To gain further insight why large enough $A$ will prevent blow-up, we recall that the blow up phenomena in the static case,  $\bb\equiv 0$  is deduced from the time evolution of the second moment, $\displaystyle V(t):=\int_{\rr^2}n(x,t)|x|^2dx$.
Indeed, a straightforward computation yields,  $\ds \dot{V}(t)=4M\Big(1-\frac{M}{8\pi}\Big)<0$, which implies that the positive $V(t)$ decreases to zero in a finite time and hence rules out existence of global classical solutions for $M>8\pi$.
In contrast, the  second moment of our non-static PKS equation \eqref{KS advection},\eqref{vector field b}, does not decrease to zero if $A$ is chosen large enough. This is the content of our next lemma.
\begin{lem}
Let $n(x,t)$ be the solution of  \eqref{KS advection} with vector $\bb(x)=A(-x_1,x_2)$, subject to initial data $n_0$ such that $\displaystyle W_0:=\int_{\rr^2}\!\!n_0(x)(x_2^2-x^2_1)dx$ is strictly positive. Then, if $A$ is chosen large enough, the second moment of the (classical) solution $\displaystyle V(t)= \int_{\rr^2}n(x,t)|x|^2dx$   increases in time.
\end{lem}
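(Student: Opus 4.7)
The plan is to derive a coupled ODE system for the second moment $V(t)$ and the auxiliary moment
\[
W(t):=\int_{\rr^2}n(x,t)(x_2^2-x_1^2)\,dx,\qquad W(0)=W_0>0,
\]
and then run a short bootstrap: once $A$ is large enough, both $V(t)$ and $W(t)$ stay above their initial values, which forces $\dot V(t)>0$ for all $t\geq 0$.

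Multiplying \eqref{KS advection} by $|x|^2$, integrating by parts, and using $\na\cdot\bb=0$ together with the standard symmetrization of the chemotactic kernel, one obtains
\[
\dot V(t) \;=\; 4M-\frac{M^2}{2\pi}+2A W(t),
\]
where the new term arises from $\na|x|^2\cdot\bb=2A(x_2^2-x_1^2)$. Applying the same procedure with the test function $x_2^2-x_1^2$ (so that diffusion vanishes since $\de(x_2^2-x_1^2)=0$, and the transport piece produces $\na(x_2^2-x_1^2)\cdot\bb=2A|x|^2$) gives
\[
\dot W(t) \;=\; 2A V(t) + R(t),\qquad R(t):=\frac{1}{2\pi}\!\iint\! n(x)n(y)\frac{(x_1-y_1)^2-(x_2-y_2)^2}{|x-y|^2}\,dx\,dy,
\]
and since the integrand of $R(t)$ has absolute value $\leq 1$, one has $|R(t)|\leq M^2/(2\pi)$.

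For the bootstrap, note that $W_0>0$ forces $\int x_2^2 n_0\,dx>\int x_1^2 n_0\,dx\geq 0$, hence $V(0)>0$ as well. Pick $A$ large enough that
\[
4M-\frac{M^2}{2\pi}+2A W_0>0 \qquad\text{and}\qquad 2A V(0)-\frac{M^2}{2\pi}>0.
\]
Define $t^*:=\sup\{t\geq 0:V(s)\geq V(0)\text{ and }W(s)\geq W_0\text{ for all }s\in[0,t]\}$. On $[0,t^*]$, the lower bounds $V\geq V(0)$ and $W\geq W_0$ fed back into the two ODEs yield $\dot V>0$ and $\dot W>0$ uniformly; by continuity $V$ and $W$ keep increasing past any finite $t^*$, so $t^*=\infty$ and $\dot V(t)>0$ for all $t\geq 0$, which is the claim.

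The one non-mechanical point is the justification of the moment identities: they require $V(t),W(t)$ to remain finite and the integrations by parts against the linearly-growing drift $\bb$ to be legitimate. I would expect this weighted-moment propagation, rather than the ODE bootstrap itself, to be where the main care is needed in a fully rigorous write-up.
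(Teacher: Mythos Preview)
Your proof is correct and follows essentially the same approach as the paper: derive the coupled differential relations for $V$ and $W$, bound the chemotactic contribution to $\dot W$ by $M^2/(2\pi)$ via symmetrization, choose $A$ large enough that both $\dot V(0)$ and $\dot W(0)$ are positive, and run the bootstrap. The paper's write-up is terser---it records the two conditions on $A$ and then simply asserts the conclusion---but the underlying argument is identical to the one you spell out.
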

\begin{proof}
First, the time evolution of $V(t)$ can be calculated as follows:
\bel\label{time evolution of V}
\left\{
\begin{split}
\frac{d}{dt}V& =4M\left(1-\frac{ M}{8\pi}\right)+2\int x\cdot \bb\, n(x,t)dx\\
& =4M\left(1-\frac{ M}{8\pi}\right)+2A W, \qquad W(t)=\int_{\rr^2}(-x_1^2+x_2^2)n(x,t)dx.
\end{split}
\right.
\eel
Next, we compute the time evolution of $W(t)$
\be\ba
\frac{d}{dt}W
=&\int(-x_1^2+x_2^2)\na\cdot(\na n- \na c n-\bb n)dx\\
=&-\frac{1}{2\pi}\iint n(x,t)(-2x_1,2x_2)\cdot\frac{x-y}{|x-y|^2}n(y,t)dxdy+2AV\\
=&-\frac{1}{2\pi}\iint\frac{-(x_1-y_1)^2+(x_2-y_2)^2}{(x_1-y_1)^2+(x_2-y_2)^2}n(x,t)n(y,t)dxdy+2AV,
\ea\ee
where the last step follows by symmetrization.
Since the first term on the right is bounded from below by $-\frac{1}{2\pi}M^2$, we have
\bel\label{time evolution of W}
\frac{d}{dt}W\geq-\frac{1}{2\pi}M^2+2AV.
\eel
Finally, notice that since $W_0$ (and hence $V_0$) are assumed strictly positive, we can choose $A$ large such that
\bel\label{initial V W}
AV_0- \frac{1}{4\pi}M^2 \geq 0,\quad AW_0 + 2M\Big(1-\frac{M}{8\pi}\Big)\geq 0 .
\eel
Combining \eqref{initial V W}, \eqref{time evolution of V} and \eqref{time evolution of W} yields that ${W}(t) > 0,\enskip {V}(t)> 0$.
\end{proof}
This shows that collapse \underline{may} be prevented if $A$ is large enough. Indeed, our theorem \ref{thm 2} shows that collapse \underline{is} avoided for large enough $A$: specifically (consult \eqref{x2+ lower bound} below), $A={\M+}\delta^{-2}$ with small enough  $\delta$  such that
\[
\delta \leq(R-1)\sqrt{\frac{2\V+(0)}{\M+}}, \qquad R^2:= \M+\frac{{\y+^2}(0)}{2\V+(0)}.
\]
Observe that by \eqref{initial configuration} $R>1$, which enables the choice of $\delta \ll 1$ and hence $A\gg 1$ for the global regularity of \eqref{KS advection},\eqref{initial configuration}.\newline
\emph{We conjecture} that a similar fast splitting scenario holds with even higher-degree harmonics, $\bb(x,t)=\nabla H(\cdot,t) \gg |x|^q $ at $|x|\gg 1$ near the origin: the higher the degree $q$ is, the  larger amount of critical mass is \emph{expected} to be `pushed away' from the origin, with even higher thresholds than $16\pi$.
A main difficulty, however, is the lack of closure to control the second-order moments in these higher-order cases.
\end{rmk}

\begin{rmk}[On the free energy]
We note  that when $\bb=0$,  $E[n]$ becomes the classical dissipative free energy
\bel\label{ClassicalFreeEnergy}
\mathcal{F}=\int_{\rr^2}n\log ndx-\frac{1}{2}\int_{\rr^2}n cdx.
\eel
Due to the importance of the property \eqref{free energy estimate},  a weak solution of \eqref{KS advection} satisfying \eqref{free energy estimate} will be called a \emph{free energy solution}.
One of the  important properties of the PKS equation \eqref{KS advection} with background flow velocity \eqref{vector field b} is the dissipation of its free energy $E[n]$. The formal computation,  indicating the energy dissipation in non-static smooth solutions, is the content of our last lemma in this section.

\begin{lem}
Consider the PKS equation \eqref{KS advection} with background fluid velocity \eqref{vector field b}. If the solution is smooth enough, the free energy $E[n](t)$ is decreasing.
\end{lem}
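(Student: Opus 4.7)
The plan is to recognize the evolution as a formal gradient flow for $E[n]$ and run the standard dissipation computation. Since $H$ is harmonic, $\bb=\nabla H$ is divergence-free, and so $\bb\cdot\nabla n=\nabla\cdot(n\bb)$. Consequently \eqref{KS advection} can be rewritten in flux form as
\begin{equation*}
\pa_t n \,=\, \nabla\cdot\bigl(n\,(\nabla\log n - \nabla c - \bb)\bigr).
\end{equation*}
Comparing with the definition \eqref{E}, this is exactly $\pa_t n = \nabla\cdot\bigl(n\nabla\tfrac{\delta E}{\delta n}\bigr)$ with $\tfrac{\delta E}{\delta n}=\log n - c - H$; the prefactor $\tfrac{1}{2}$ in front of $cn$ in the definition of $E$ is compensated by the symmetry of the kernel $K$ when one takes the variational derivative.

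Next, I would differentiate $E[n](t)$ term by term along a smooth solution. Mass conservation gives $\frac{d}{dt}\int n\log n\,dx=\int \log n\,\pa_t n\,dx$. Using $c=K\ast n$ together with $K(x-y)=K(y-x)$, a symmetrization gives $\frac{d}{dt}\int cn\,dx=2\int c\,\pa_t n\,dx$, so the interaction term in $E$ contributes $-\int c\,\pa_t n\,dx$. Since $H$ is stationary in $t$, the potential term contributes $-\int H\,\pa_t n\,dx$. Adding these up,
\begin{equation*}
\frac{d}{dt}E[n](t) \,=\, \int (\log n - c - H)\,\pa_t n\,dx.
\end{equation*}

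Finally, substituting the flux form of the equation and integrating by parts,
\begin{equation*}
\frac{d}{dt}E[n](t) \,=\, -\int n\,\bigl|\nabla\log n - \nabla c - \bb\bigr|^2\,dx \,\leq\, 0,
\end{equation*}
which is the infinitesimal form of \eqref{free energy estimate}. The only nontrivial point is the justification of the integration by parts. The boundary contributions at infinity involve $n$, $n\nabla c$, and the unbounded factor $n\bb\sim A|x|n$; this is the main obstacle, since the linear growth of $\bb$ forbids the most cavalier tail estimates. However, under the \emph{sufficient smoothness} hypothesized in the lemma, combined with weighted bounds of the type $(1+|x|^2)n\in L^1$, a standard cut-off/truncation argument handles the boundary terms and makes the computation rigorous for classical solutions; this is also the template from which the inequality version of \eqref{free energy estimate} for the weak solutions of Theorem~\ref{thm 2} will be recovered later by approximation.
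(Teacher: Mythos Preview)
Your argument is correct and follows essentially the same route as the paper: compute $\frac{d}{dt}E[n]=\int(\log n-c-H)\,n_t\,dx$, substitute the flux form of the equation, integrate by parts, and use $\bb=\nabla H$ to close the square. If anything, you are more explicit than the paper about why the $\tfrac12\int cn\,dx$ term yields $-\int c\,n_t\,dx$ (via the symmetry of $K$) and about the cut-off needed to absorb the linearly growing factor $\bb$ at infinity; the paper simply assumes ``smooth enough'' and does the computation in one line.
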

\begin{proof}
The time evolution of the free energy \eqref{E} can be computed in terms of the potential $H=\hf A
 (x^2_2-x^2_1)$,
\be
\ba
\frac{d}{dt}E[n](t)
=&\int n_t(\log n- c-H)dx
=-\int n(\na \log n- \na c-\bb)\cdot(\na \log n-\na c-\na H)dx\\
=&-\int n|\na \log n- \na c-\bb|^2dx\leq 0.
\ea\ee
This completes the proof of the lemma.
\end{proof}
\end{rmk}

\begin{rmk}[Smoothness] Arguing along the lines \cite{EM2016}, one can prove that the free energy  solution admits higher-order integrability and consequently retains Sobolev smoothness for all positive time, $n\in C_c^{\infty}((0, T]; C^\infty_x)$ for all  $T<\infty$, thus our global weak solution is in fact a global strong solution.
\end{rmk}

\begin{rmk}[Stability]
We  note that by symmetry, our $16\pi$-threshold stated in theorem \ref{thm 2} is evenly divided between the upper- and lower-halves of the plane, each should contain at most $8\pi$ mass. We expect that this $16\pi$ threshold remains valid even for small \emph{asymmetric} perturbations, as long as  the mass in each half is kept \emph{separately} below the $8\pi$ threshold.
\end{rmk}

Our paper is organized as follows. In section 2, we introduce the regularized problem to (\ref{KS advection}) which leads to the local existence results. In section 3, we prove the main theorem, and in the appendix, we give detailed proofs of the results stated in section 2.

\section{Local existence}
\subsection{Weak formulation}
It is standard to understand the Keller-Segel equation \eqref{KS advection} with background fluid velocity \eqref{vector field b} in the following weak formulation.
\begin{defn}[weak formulation] The function $n$ is the weak solution of (\ref{KS advection}) if for $\forall \varphi\in C^\infty_{c}(\rr^2_+)$, the following holds:
\bel\label{weak formulation}
\ba
\frac{d}{dt}\int_{\rr^2}\varphi ndx=&\int_{\rr^2}\de \varphi ndx-\frac{1}{4\pi}\int_{\rr^2\times \rr^2}\frac{(\na\varphi(x)-\na \varphi(y))\cdot(x-y)}{|x-y|^2}n(x,t)n(y,t)dxdy\\
&+\int_{\rr^2}\na \varphi\cdot \bb ndx.
\ea
\eel
\end{defn}
Taking advantage of the assumed symmetry across the $x_1$-axis, one can further simplify the notion of a weak solution by restricting attention to the upper half plane, $\rr_+^2= \{(x_1,x_2)\,|\, x_2 \geq0\}$.
\begin{thm} The function $n$ is a weak solution of (\ref{KS advection}) if
$n_+:= n \mathbf{1}_{x_2\geq 0}$ satisfies for $\forall \varphi\in C^\infty_{c}(\rr^2_+)$,
\begin{eqnarray}\label{weak formulation for the upper half plane}
\ba
\frac{d}{dt}\int_{\rr_+^2}\varphi n_+dx=&\int_{\rr_+^2}\de \varphi n_+dx-\frac{1}{4\pi}\int_{\rr_+^2\times\rr_+^2}\frac{(\na\varphi(x)-\na \varphi(y))\cdot(x-y)}{|x-y|^2}n_+(x,t)n_+(y,t)dxdy\\
&+\int_{\rr^2_+}\na c_-\cdot \na \varphi n_+ dx+\int_{\rr_+^2}\na \varphi\cdot \bb n_+dx.
\ea
\end{eqnarray}
Here
\begin{align*}
\na c_-(x):=\textcolor{blue}{-\int_{(\rr_+^2)^c}\frac{x-y}{2\pi|x-y|^2}n_-(y)dy=-\int_{ (\rr_+^2)^c}\frac{x-y}{2\pi|x-y|^2}n_+(-y)dy}, \qquad
n_-:= n \mathbf{1}_{x_2\leq 0}.
\end{align*}
\end{thm}

\begin{proof} Rewrite (\ref{weak formulation}) as follows:
\be
\ba
\frac{d}{dt}\int_{\rr_+^2}\varphi n_+dx=&\int_{\rr_+^2}\de \varphi n_+dx-\frac{1}{4\pi}\int_{\rr_+^2\times\rr_+^2}\frac{(\na\varphi(x)-\na \varphi(y))\cdot(x-y)}{|x-y|^2}n_+(x,t)n_+(y,t)dxdy\\
&-\frac{1}{2\pi}\int_{\rr_+^2\times\rr^2_-}\frac{\na\varphi(x)\cdot(x-y)}{|x-y|^2}n_+(x,t)n_-(y,t)dxdy+\int_{\rr_+^2}\na \varphi\cdot \bb n_+dx.
\ea
\ee
The third term can be rewritten as $\displaystyle \int_{\rr^2_+}\na c_-\cdot\na \varphi\, n_+(x) dx$, and we get (\ref{weak formulation for the upper half plane}).
\end{proof}

\subsection{Regularized equation and local existence theorems}
In this section we introduce the local existence theorem and the blow up criterion for the Keller-Segel equation with advection. The theorems are standard, so the proofs are postponed to the appendix. The interested reader are referred to the papers \cite{BCM2008},\cite{BDP2006} for further details.

In order to prove the local existence theorem and the blow up criterion for the Keller-Segel system with advection (\ref{KS advection}), we regularize the system as follows:

\begin{equation}\label{rPKS}
\frac{\pa n^\ep}{\pa t}+\na\cdot (n^\ep\na c^\ep)+\bb\cdot\na n^\ep =\de n^\ep, \quad
 c^\ep:= K^\ep*n, \qquad x\in\rr^2, t>0,
\end{equation}
with the regularized kernel, $K^\ep$, given by
\bel\label{Kep}
K^\ep(z):=K^1\bigg(\frac{z}{\ep}\bigg)-\frac{1}{2\pi}\log \ep, \qquad K^1(z):=\left\{\begin{array}{ll}-\frac{1}{2\pi}\log|z|,&\text{ if}\quad|z|\geq 4,\\0,&\text{ if}\quad|z|\leq 1.\end{array}\right.
\eel
\ifx
the following estimates on $K^1$ hold:
\bel\label{K1 property}
|\na K^1(z)|\leq \frac{1}{2\pi|z|},\quad K^1(z)\leq -\frac{1}{2\pi}\log |z|
\eel
for any $z\in \rr^2$. As a result, we also have the following estimate on $K^\ep$:
\bel\label{na K ep property}
|\na K^\ep (z)|\leq \frac{1}{2\pi |z|}, \forall z\in \rr^2.
\eel
\fi%
Noting that $|\na K^\ep (z)|\leq C_\ep$ for all $z\in \rr^2$, it follows that the solutions to the equation \eqref{rPKS} exist for all time.  The proof is similar to the corresponding proof in the classical case. We refer the interested reader to the paper \cite{BDP2006} for more details.

Before stating the local existence theorems, we introduce the entropy of the solution
\bel\label{entropy}
S[n]:=\int_{\rr^2}n\log ndx.
\eel
Now the local existence theorems are expressed as follows:
\begin{pro}{$($Local Existence Criterion$)$}.\label{pro 2.1}
Assume that $|\bb|(x)\leq C |x|, \forall x\in \rr^2$. Suppose $\{n^\ep\}_{\ep\geq 0}$ are the solutions of the regularized equation \eqref{rPKS} on $[0,T^*)$. If $\{S[n^\ep](t)\}_\ep$ is bounded from above uniformly in $\ep$ and in $t\in [0,T^*)$, then the cluster points of $\{n^\ep\}_{\ep\ra0}$, in a suitable topology, are non-negative weak solutions of the PKS system with advection \eqref{KS advection} on $[0,T^*)$ and satisfies the relation \eqref{free energy estimate}.
\end{pro}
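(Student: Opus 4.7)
The plan is a standard compactness-and-limit argument in the spirit of \cite{BDP2006, BCM2008}, adapted to accommodate the linearly-growing drift $\bb$. First, one establishes uniform a priori bounds: mass conservation $\|n^\ep(t)\|_1 \equiv M$ is immediate from \eqref{rPKS}. Multiplying \eqref{rPKS} by $|x|^2$, integrating by parts, and using $|\bb(x)|\leq C|x|$ together with Cauchy--Schwarz gives
\[
\frac{d}{dt}\int_{\rr^2} n^\ep|x|^2\,dx \leq 4M + C'\int_{\rr^2} n^\ep |x|^2\,dx,
\]
so Gr\"onwall yields a uniform second-moment bound on $[0,T^*)$. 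Together with the entropy hypothesis, this controls the negative part $\int n^\ep \log^- n^\ep\,dx$ (via a standard $x\log x$ bound against $(1+|x|^2)n^\ep$), giving uniform equi-integrability and tightness of $\{n^\ep(\cdot,t)\}$. Testing \eqref{rPKS} against $\log n^\ep - c^\ep - H$ produces the regularized free-energy dissipation identity, giving in particular a uniform $L^2_{t,x}$ bound on $\sqrt{n^\ep}(\na\log n^\ep - \na c^\ep - \bb)$ and hence, after a Cauchy--Schwarz split, a uniform bound on the Fisher information $\int_0^t\int|\na\sqrt{n^\ep}|^2\,dxds$.

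By Dunford--Pettis, $n^\ep \rightharpoonup n$ weakly in $L^1(\rr^2)$ along a subsequence for a.e.\ $t$. The Fisher-information bound upgrades this to $\sqrt{n^\ep} \to \sqrt{n}$ strongly in $L^2_{\rm loc}$. Using \eqref{rPKS} to control $\pa_t n^\ep$ in a negative Sobolev norm, an Aubin--Lions-type lemma then yields strong $L^1_{\rm loc}$ convergence $n^\ep \to n$ on $[0,T^*)\times \rr^2$. For the potential, $\na c^\ep = \na K^\ep * n^\ep$, where $\na K^\ep \to \na K$ pointwise off the origin and $\na K^\ep$ is uniformly bounded in $L^p_{\rm loc}$ for $p<2$; combined with the strong convergence of $n^\ep$, this gives $\na c^\ep \to \na c$ in $L^p_{\rm loc}$.

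To pass to the limit in the weak formulation \eqref{weak formulation}, observe that for $\varphi \in C^\infty_c(\rr^2)$ the symmetrized kernel $\Phi(x,y):=\frac{(\na\varphi(x)-\na\varphi(y))\cdot(x-y)}{|x-y|^2}$ is bounded and continuous (the apparent singularity at $x=y$ is removable, giving a Hessian-type expression), so the bilinear aggregation term is a weakly-continuous functional of $n^\ep \otimes n^\ep$ in $L^1(\rr^2\times\rr^2)$ and converges along the strongly convergent subsequence; the error from replacing $\na K^\ep$ by $\na K$ vanishes by dominated convergence controlled by the uniform second moment. The $\int n^\ep \de \varphi\,dx$ term passes by weak $L^1$ convergence, and $\int n^\ep \bb\cdot\na\varphi\,dx$ passes because $\bb\cdot\na\varphi$ is bounded (the compact support of $\na\varphi$ cuts off the linear growth of $\bb$).

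For the free-energy inequality one passes the regularized identity to the limit using: (i) weak lower semi-continuity of $\int n\log n$ on $L^1$; (ii) convergence of $\int c\, n\,dx$ from the strong $L^1_{\rm loc}$ convergence of $n^\ep$ together with the $L^p_{\rm loc}$ convergence of $\na c^\ep$; (iii) convergence of $\int H n\,dx$ from weak $L^1$ convergence plus the uniform second moment (needed because $H$ has quadratic growth); and (iv) an Ioffe-type lower semi-continuity theorem for convex integrands to handle the dissipation term $\int n|\na\log n-\na c-\bb|^2$. The \emph{main obstacle} is ensuring enough strong compactness to pass to the limit in this quadratic dissipation, and in the nonlinear cross product $n^\ep\na c^\ep$ appearing in both \eqref{weak formulation} and the energy identity; this is exactly where the Fisher-information bound from Step 1 is essential, letting the BDP framework go through with the new drift $\bb$ accommodated through its linear growth paired against the uniform second moment.
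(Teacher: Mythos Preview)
Your overall architecture (a priori estimates $\to$ Aubin--Lions compactness $\to$ pass to the limit in the weak formulation $\to$ lower semi-continuity for the free energy) matches the paper's. However, there is a genuine gap in your derivation of the Fisher information bound, and several technical differences in how compactness is obtained.

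\textbf{The gap.} You claim that the regularized free-energy dissipation identity gives a uniform $L^2_{t,x}$ bound on $\sqrt{n^\ep}\bigl(\na\log n^\ep-\na c^\ep-\bb\bigr)$, ``and hence, after a Cauchy--Schwarz split, a uniform bound on the Fisher information.'' The first part is fine: once $E[n^\ep](t)$ is bounded below (entropy hypothesis, second moment, and $K^\ep\leq K$ together with the logarithmic HLS inequality to control $\int c^\ep n^\ep$ from above), the dissipation integral is bounded. But the split
\[
\int n^\ep|\na\log n^\ep|^2 \leq 2\int n^\ep|\na\log n^\ep-\na c^\ep-\bb|^2 + 2\int n^\ep|\na c^\ep+\bb|^2
\]
requires a bound on $\int_0^T\!\int n^\ep|\na c^\ep|^2$, which you do not have a priori: this is in fact one of the principal estimates the paper works to establish, and it is obtained there only \emph{after} the Fisher information is already in hand. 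The paper avoids this circularity by deriving the Fisher bound directly from the entropy evolution
\[
\frac{d}{dt}S[n^\ep]=-4\int|\na\sqrt{n^\ep}|^2\,dx+\int n^\ep(-\Delta c^\ep)\,dx,
\]
closing the last term against the dissipation via Gagliardo--Nirenberg--Sobolev and the smallness of the super-level mass $\int_{n^\ep\geq K}n^\ep$ coming from the entropy hypothesis. Only then does it bound $\int n^\ep|\na c^\ep|^2$ (by differentiating $\int n^\ep c^\ep$ and using the Fisher bound just obtained). Your route, as written, is circular.

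\textbf{Differences in compactness.} The paper does not work in an $L^1_{\rm loc}$ / Dunford--Pettis framework. Instead it upgrades the entropy bound to uniform $L^\infty_t L^p_x$ control on $n^\ep$ for all $1<p<\infty$ (via a differential inequality for $\int(n^\ep-K)_+^p$ closed with GNS), then bounds $\na n^\ep$ in $L^2_{t,x}$, and applies Aubin--Lions with $V=H^1\cap\{|x|\,n^2\in L^1\}$, $H=L^2$, $W=H^{-1}$, obtaining strong $L^2([0,T];L^2)$ compactness. For the $W$-bound the paper also needs a uniform \emph{fourth} moment estimate, because $\bb n^\ep$ must lie in $L^2_tL^2_x$ to control $\partial_t n^\ep$ in $H^{-1}$, and $|\bb|\sim|x|$; you do not mention this, and it does not follow from the second moment alone. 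Your softer $L^1$-based route may be viable in principle, but it depends on the Fisher bound you have not secured.
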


\begin{pro}\label{pro 1.2}{$($Maximal Free-energy Solutions$)$}. Assume the boundedness of the vector field \textcolor{blue}{$|\bb|(x)\leq C|x|$} and the integrability of initial data $(1+|x|^2)n_0\in L^1_+(\rr^2),\enskip n_0\log n_0\in L^1(\rr^2)$. Then there exists a maximal existence time $T^*>0$ of a free energy solution to the PKS system with advection \eqref{KS advection},\eqref{free energy estimate}. Moreover, if $T^*<\infty$ then
\be
\lim_{t\rightarrow T^*}\int_{\rr^2}n\log n dx=\infty.
\ee
\end{pro}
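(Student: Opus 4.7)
The plan is to construct the maximal free-energy solution as a limit of the regularized approximants $n^\ep$ solving \eqref{rPKS}, and then to extract the blow-up criterion from a continuation argument driven by Proposition \ref{pro 2.1}. For each fixed $\ep$ the regularized problem has a global smooth solution $n^\ep$; the task is thus to produce uniform-in-$\ep$ control so that Proposition \ref{pro 2.1} can be invoked on the largest possible time interval.

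First I would establish an $\ep$-uniform second moment bound. Testing \eqref{rPKS} against $|x|^2$ and symmetrizing the nonlocal term gives
\be
\frac{d}{dt}V^\ep(t)\leq 4M + 2\int x\cdot \bb\, n^\ep dx\leq 4M + C\, V^\ep(t),
\ee
where the regularized HLS-type contribution has a definite negative sign and the hypothesis $|\bb(x)|\lesssim |x|$ is used in the last inequality. Gr\"onwall's inequality then yields a bound on $V^\ep(t)$ on any finite interval depending only on $V_0$, $M$, and $t$, uniform in $\ep$. Next, the regularized free-energy identity
\be
E^\ep[n^\ep](t)+\int_0^t\int n^\ep|\na\log n^\ep-\na c^\ep-\bb|^2dxds\leq E[n_0],
\ee
derived exactly as in the lemma preceding Proposition \ref{pro 2.1}, combines with the logarithmic Hardy-Littlewood-Sobolev inequality $\int c^\ep n^\ep dx\leq \frac{M}{4\pi}\int n^\ep\log n^\ep dx+C(M,V^\ep)$ to absorb part of the entropy into the energy, yielding uniform entropy control on any finite interval in the sub-critical regime $M<8\pi$.

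To handle the possibly super-critical case I would introduce
\be
T^*:=\sup\Big\{T>0\,:\,\sup_{\ep>0}\sup_{t\in[0,T]}S[n^\ep](t)<\infty\Big\}.
\ee
Strict positivity of $T^*$ follows from a short-time ODE comparison. Using the entropy identity
\be
\frac{d}{dt}S[n^\ep]\leq -4\|\na\sqrt{n^\ep}\|_2^2+\int n^\ep(-\de c^\ep)dx,
\ee
together with $\int n^\ep(-\de c^\ep)dx\lesssim \|n^\ep\|_2^2$ coming from the mollified structure of $K^\ep$ and the Gagliardo-Nirenberg interpolation $\|n^\ep\|_2^2\lesssim \|n^\ep\|_1 \|\na\sqrt{n^\ep}\|_2^2$, one obtains a closed differential inequality for $S[n^\ep]$ whose solution remains finite for a time $\tau>0$ depending only on the triple $(M,S[n_0],V_0)$. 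With this uniform entropy control on $[0,T^*)$, Proposition \ref{pro 2.1} produces a cluster point, and standard Dunford-Pettis arguments (via the $L\log L$ bound) together with Aubin-Lions type compactness identify the limit as a free-energy solution on $[0,T^*)$ satisfying \eqref{free energy estimate}.

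Finally, for the blow-up criterion I would argue by contradiction. Suppose $T^*<\infty$ and $\liminf_{t\to T^*}S[n(t)]<\infty$; pick $t_k\nearrow T^*$ with $S[n(t_k)]\leq C$, and note that the second moment $V(t_k)$ is also bounded by the Gr\"onwall estimate of Step 1. Restarting with $n(t_k)$ as initial data, the short-time existence interval $\tau>0$ from the previous step depends only on $(M,S[n(t_k)],V(t_k))$ and is therefore bounded below uniformly in $k$; choosing $k$ large enough so that $t_k+\tau>T^*$, the restarted free-energy solution extends the original one beyond $T^*$, contradicting maximality. The main technical obstacle I anticipate is precisely this short-time entropy estimate in the super-critical regime: one must arrange the closed differential inequality for $S[n^\ep]$ so that $\tau$ depends only on $(M,S[n_0],V_0)$ and not on any higher-regularity norm of the initial datum, as only then does the restart argument apply at the limit time $T^*$.
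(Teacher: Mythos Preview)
Your overall architecture---regularize, obtain uniform-in-$\ep$ entropy control on a maximal time interval, invoke Proposition \ref{pro 2.1} for compactness, then run a restart/continuation argument for the blow-up criterion---is exactly the approach of \cite{BCM2008}, which is also what the paper defers to. The second-moment Gr\"onwall step and the contradiction argument at the end are fine as written.

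The one genuine gap is the step you yourself flag: closing the short-time differential inequality for $S[n^\ep]$ when $M$ may exceed $8\pi$. The chain you propose,
\[
\int n^\ep(-\Delta c^\ep)\,dx \;\lesssim\; \|n^\ep\|_2^2 \;\lesssim\; M\,\|\nabla\sqrt{n^\ep}\|_2^2,
\]
only yields $\frac{d}{dt}S[n^\ep]\leq (CM-4)\|\nabla\sqrt{n^\ep}\|_2^2$, which for $CM>4$ gives no control on $S$ at all, since $\|\nabla\sqrt{n^\ep}\|_2^2$ is not a priori bounded by any function of $S$. What is actually needed is the level-set decomposition that appears in the paper's appendix (Step~1 of the proof of Proposition \ref{pro 2.1}): write
\[
\|n^\ep\|_2^2 \leq MK + \int_{\{n^\ep\geq K\}} (n^\ep)^2\,dx \leq MK + C\,\eta(K)^{1/2}M^{1/2}\,\|\nabla\sqrt{n^\ep}\|_2^2,
\qquad \eta(K):=\frac{1}{\log K}\int n^\ep\log^+ n^\ep\,dx,
\]
and then run a bootstrap: assume $S[n^\ep]\leq 2S[n_0]+C_V$ on $[0,\tau]$, choose $K$ large (depending only on this bound and on $M$) so that the gradient term is absorbed, conclude $\frac{d}{dt}S[n^\ep]\leq MK$ on $[0,\tau]$, and read off a lower bound $\tau\geq\tau_0(M,S[n_0],V_0)$. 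With this correction your restart argument goes through verbatim, because $\tau_0$ depends only on the quantities that are controlled along the sequence $t_k\nearrow T^*$.
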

We conclude that if the entropy $S[n](t)=\int n\log n$ is bounded, then the free energy solution of \eqref{KS advection} exists locally. Moreover, if $S[n](t)<\infty $ for all $t<\infty$, the solution exists for all time.

\section{Proof of the main results}
\subsection{The three-step `battle-plan'} We proceed in three steps. The \emph{first step} carried in section \ref{sec:step1} below, is to control cell density distribution.
From the last section, we see that an entropy bound is essential for derivation of  local existence theorems for the PKS equation (\ref{KS advection}),\eqref{vector field b}. To this end, information about the distribution of cell density is crucial. The following lemma  is the key to the proof of the main results. It shows that mass cannot concentrate along the the $x_1$-axis, since we can find a thin enough strip along the $x_1$-axis with controlled amount of mass.

\noindent
For the rest of this section we fixed a small parameter $0<\eta\ll1$ which will quantify the sharp estimates in the sequel.

\begin{lem}\label{cell distribution lemma}
Suppose a sufficiently smooth $n_0$ is symmetric about the $x_1$ axis and assume that
\bel\label{initial configuration 1}
R^2:= \M+\frac{{\y+^2}(0)}{2\V+(0)} >1.
\eel
Fix a small enough $0<\eta\ll1$.
Then there exists  $\delta=\delta(\x2+(0), \V+(0), M, \eta)$ such that if we choose $A>\frac{\M+}{\delta^2}$, the smooth solutions to the regularized \eqref{rPKS}${}_\ep$ satisfy, uniformly for small enough $\ep$,
\bel\label{local mass estimate}
\int_{|x_2|\leq 2\delta} n^\ep(x,t) dx \leq \frac{(1+\eta)^2}{2R^2}M.
\eel
\end{lem}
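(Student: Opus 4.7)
My plan is to reduce the strip-mass bound to Chebyshev-type control of the upper-half moments $\x2+$ and $\V+$, and then to track these moments via ODEs derived from the upper-half weak formulation \eqref{weak formulation for the upper half plane}. Since $n_0$ is $x_2$-symmetric and every ingredient of the regularized equation --- the diffusion, the kernel $K^\ep$, and the strain $\bb=A(-x_1,x_2)$ --- is even in $x_2$, the solution $n^\ep(\cdot,t)$ remains $x_2$-symmetric for all $t \geq 0$; therefore
\[
\int_{|x_2|\leq 2\delta} n^\ep(x,t)\, dx \;=\; 2\int_{0 \leq x_2 \leq 2\delta} n^\ep_+(x,t)\, dx.
\]
Chebyshev's inequality about the center of mass $\x2+(t)$ gives, whenever $\x2+(t) > 2\delta$,
\[
\int_{|x_2|\leq 2\delta} n^\ep(x,t)\, dx \;\leq\; \frac{2\V+(t)}{(\x2+(t)-2\delta)^2},
\]
and, unpacking $R^2 = \M+\x2+^2(0)/(2\V+(0))$, the lemma reduces to establishing $\V+(t)/(\x2+(t)-2\delta)^2 \leq (1+\eta)^2 \V+(0)/\x2+^2(0)$ uniformly in $t$.

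The next step is to extract the moment ODEs by testing \eqref{weak formulation for the upper half plane} (with $K$ replaced by $K^\ep$) against $\varphi = x_2$ and $\varphi = (x_2 - \x2+(t))^2$. The first test annihilates the diffusion and the self-interaction, leaving only transport and cross-chemotaxis:
\[
\M+\,\dot{\x2+} \;=\; A\M+\,\x2+ \;+\; \int_{\rr_+^2} \pa_{x_2} c_-\, n^\ep_+\, dx .
\]
The second test yields
\[
\dot{\V+} \;=\; 2\M+ \,+\, 2A\V+ \,-\, \frac{1}{2\pi}\iint_{\rr_+^2\times\rr_+^2}\frac{(x_2-y_2)^2}{|x-y|^2} n^\ep_+(x) n^\ep_+(y)\, dxdy \,+\, 2\int_{\rr_+^2}(x_2-\x2+)\pa_{x_2} c_-\, n^\ep_+\, dx.
\]
Reflecting $n^\ep_-$ across the $x_1$-axis rewrites $\pa_{x_2} c_-(x) = -(2\pi)^{-1}\!\int_{\rr_+^2}\!\frac{x_2+\tilde y_2}{|x-\bar{\tilde y}|^2}\,n^\ep_+(\tilde y)\,d\tilde y \leq 0$ on $\rr_+^2$, so the transport dominates in $\dot{\x2+}$ (yielding the a priori $\dot{\x2+} \leq A\x2+$) and the self-interaction term in $\dot{\V+}$ is non-positive. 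Standard potential estimates on the resulting Poisson-type kernel, uniform in $\ep$, deliver $|\int \pa_{x_2} c_- n^\ep_+| \leq C_1(M)$ and $|\int(x_2-\x2+)\pa_{x_2} c_- n^\ep_+| \leq C_2(M)(1+\sqrt{\V+})$. Gr\"onwall then produces the two-sided bounds
\[
\x2+(0)\,e^{At}\Big(1-\tfrac{C_1(M)}{A\M+\x2+(0)}\Big) \;\leq\; \x2+(t) \;\leq\; \x2+(0)\,e^{At},\qquad \V+(t) \;\leq\; \Big(\V+(0)+\tfrac{C_3(M)}{A}\Big)e^{2At}.
\]

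Finally I would first fix $\delta$ small enough (depending on $\x2+(0), \V+(0), M, \eta$) that $\x2+(0) - 2\delta \geq \x2+(0)/(1+\eta_1)$ for some auxiliary $\eta_1 \ll \eta$, and then pick $A > \M+/\delta^2$ so large that the $O(A^{-1})$ corrections in both moment bounds are each at most $\eta_2 \ll \eta$. The exponential factors $e^{2At}$ then cancel in
\[
\frac{\V+(t)}{(\x2+(t)-2\delta)^2} \;\leq\; \frac{(\V+(0)+C_3(M)/A)\,e^{2At}}{\big(\x2+(0)(1-\eta_2)e^{At} - 2\delta\big)^2} \;\leq\; \frac{(1+\eta)^2\,\V+(0)}{\x2+^2(0)},
\]
and, after the Chebyshev reduction of the first paragraph, this is precisely \eqref{local mass estimate}. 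The main obstacle is the sign-indefinite cross-chemotaxis term in $\dot{\V+}$: with no definite sign available it must be bounded sub-linearly in $\V+$ (rather than comparably to $A\V+$) to be absorbed by the advection. The reflection manoeuvre above is what enables this: the reflected kernel is a Poisson-type kernel whose quantitative bounds involve only $M$ (where the sub-criticality $M < 16\pi$ keeps the relevant constants finite) and not $A$, so for $A \gg \M+/\delta^2$ the exponentially growing advection dominates the chemotactic perturbation uniformly in time.
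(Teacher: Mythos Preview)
Your overall architecture --- reduce to Chebyshev control of $\x2+(t)$ and $\V+(t)$ and track these via differential inequalities coming from the half-plane weak formulation --- matches the paper exactly. The gap is in the moment ODE for $\x2+$.

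You test with $\varphi=x_2$ and then assert that ``standard potential estimates'' give
\[
\Bigl|\int_{\rr^2_+}\pa_{x_2}c_-\,n^\ep_+\,dx\Bigr|\;\leq\;C_1(M)
\]
uniformly in $\ep$ and $t$. This bound is false. Using the reflection you describe, the integrand is
\[
\frac{1}{2\pi}\iint_{\rr^2_+\times\rr^2_+}\frac{x_2+\tilde y_2}{(x_1-\tilde y_1)^2+(x_2+\tilde y_2)^2}\,n^\ep_+(x)\,n^\ep_+(\tilde y)\,dx\,d\tilde y,
\]
and if $n^\ep_+$ is (nearly) a bump of mass $\M+$ concentrated at height $x_2=\kappa$, the kernel evaluates to roughly $(2\kappa)^{-1}$ and the whole expression is of order $\M+^2/\kappa$, which is arbitrarily large as $\kappa\to 0$. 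Since proximity to the axis is precisely the scenario you are trying to exclude, you cannot assume it away; no ``Poisson-type'' estimate will produce a bound depending only on $M$. The same issue contaminates your $\dot{\V+}$ computation when you test with $(x_2-\x2+)^2$: the cross term $\int(x_2-\x2+)\pa_{x_2}c_-\,n^\ep_+$ is again unbounded in general (the available pointwise bound $|\pa_{x_2}c_-|\leq\M+/(2\pi x_2)$ leaves you with $\int|1-\x2+/x_2|\,n_+$).

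The paper's remedy is exactly to not test with $x_2$. Instead one tests with a cutoff $\varphi$ equal to $x_2$ for $x_2>2\delta$ and vanishing for $x_2\leq\delta$. On the support of $\na\varphi$ one has $x_2\geq\delta$, so the dangerous cross term obeys $|III|\leq C_\varphi\M+^2/\delta$, and the diffusion and self-interaction terms (no longer zero!) are likewise $O(\M+^2/\delta)$. These errors are of lower order than the advection term $A\int\varphi n_+\sim(\M+/\delta^2)\M+\x2+$, and after Gr\"onwall one gets $\x2+(t)\geq(\x2+(0)-C\delta)e^{At}$. For $\V+$ the paper instead tests with $x_2^2$, where a cancellation $|2x_2\cdot\pa_{x_2}c_-|\leq\M+/\pi$ does give a bound depending only on $M$; then $\V+(t)=\int x_2^2 n_+-\M+\x2+^2(t)$ is recovered. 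The errors thus sit in $\delta$, not in $1/A$, and one closes by taking $\delta$ small.
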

 Condition \eqref{local mass estimate} implies, at least for $M<16\pi$, that the mass inside that $\delta$-strip is less than $8\pi$. On the other hand, it indicates the reason for the limitation $R>1$: for if $R<1$, then the bound \eqref{local mass estimate} would allow  a concentration of mass $\frac{M}{2R^2} \geq 8\pi$ inside the strip $|x_2|\leq 2\delta$, which in turn leads to a finite-time blow-up.

The proof of  lemma \ref{cell distribution lemma} is based on the following simple observation\myr{to pay attention...}.  Given $f$ with $\rr_+^2$-center of mass  at $(\cdot,y_f)$ and variation
$V_f= \int |x_2-y_f|^2 f(x)dx$, we find that its total mass \emph{outside} the strip $\mathcal{S}[y_f,r]:= \{(x_1,x_2)||x_2-y_f|\leq r\}$ with radius $r=R\sqrt{2V_f/M_f}$, does not exceed
\begin{align*}
\int_{|x_2-y_f|>r}f(x)dx=\int_{|x_2-y_f|>r}f(x)\frac{|x_2-y_f|^2}{|x_2-y_f|^2}dx
\leq \frac{{M_f}}{2R^2V_f}\int f(x) |x_2-y_f|^2 dx=\frac{M_f}{2R^2}
\end{align*}
If we can find the $\delta$ such that our target strip $\mathcal{S}_\delta:=\{|x_2|\leq 2\delta\}$ is lying below and \emph{outside} the strip $\mathcal{S}[y_f, r]$, then the total mass in the strip $\mathcal{S}_\delta$ would be smaller than $\frac{1}{2R^2}M_f$. When $n^\ep(x,t)$ takes the role of $f(x)$ with $(y_f,V_f) \mapsto (\y+(t),\V+(t))$, the aim is to bound the strip $\mathcal{S}[\y+(t), r(t)]$ with radius $r(t)=R\sqrt{2\V+(t)/\M+}$ away from a fixed strip $\mathcal{S}_\delta$.
To this end we collect the necessary estimates on $\y+(t), \V+(t)$ and complete the proof of the lemma in section \ref{sec:step1}.

The \emph{second step}, carried in section \ref{sec:step2}, is to prove  the main theorem under a \emph{constrained setup}:  equipped with lemma \ref{cell distribution lemma} we can control the entropy and prove a weaker form of our main theorem for  any  $M<16\pi$ (which is still  larger than the $8\pi$ barrier), under the constraint that  the mass $M$ concentrates far enough from the $x_1$-axis. This is quantified in our next theorem.
We recall that a small parameter $0<\eta\ll1$ was already quantified in lemma \ref{cell distribution lemma}.
\begin{thm}\label{thm 1}
Consider the PKS equation \eqref{KS advection} with background fluid velocity \eqref{vector field b} subject to $H^s(s\geq 2)$ initial data, symmetric about the $x_1$-axis, with mass $M=|n_0|_1< 16\pi$, and bounded second moment $(1+|x|^2)n_0\in L^1(\rr^2)$.
If $\displaystyle R^2=M_+\frac{\y+^2(0)}{2\V+(0)}$ is large enough so that
\begin{equation}\label{constrained setup}
R^2 > \frac{(1+\eta)^2M}{16\pi-M}, \qquad
\end{equation}
then there exists a large enough  $A=A( M, {\x2+(0)}, \V+(0),\eta)$, such that the free energy solution to PKS  \eqref{KS advection},\eqref{vector field b} exists for all time.
\end{thm}
Observe that $R$ is a dimensionless parameter and $R$ being large indicates that most of the mass concentrates away from the `critical' strip, ${\mathcal S}_\delta$ along the $x_1$-axis, namely --- either the center of mass $\y+$ is far enough and/or the mass variation $\V+/M_+ $ is small enough. Either way, we find that for any $M<16\pi$, if $R$ is large enough so that \eqref{constrained setup} holds, then \eqref{thm 1} yields global existence. Although  theorem \ref{thm 1} is not as sharp as the main theorem, its proof  is more illuminating and can be extended easily to prove the main theorem for the `limiting case' of any $M<16\pi$. We therefore include its proof  in section \ref{sec:step2}.\newline
Finally, the \emph{third step} carried in section \ref{sec:step3} presents the proof of the main theorem \ref{thm 2}.\newline
We turn to a detailed discussion of the three steps.

\subsection{Step 1--- control of the cell density distribution}\label{sec:step1}
As  pointed out before, the proof involves the calculation of $\x2+(t)$ and $V_+(t)$, summarized in  the following two lemmas. Here and below, we let $A\lesssim B$ denote  the relation $A\leq CB$ with a constant $C$ which is independent of $\delta$.

\begin{lem}\label{Lem:x2+}
Consider the regularized PKS  equation \eqref{rPKS} with background fluid velocity \eqref{vector field b}. Assume that the initial center of mass $\y+(0)$ is bounded away  from the $x_1$-axes in the sense that \eqref{initial configuration 1} holds. Then \textcolor{blue}{for any  sufficiently small $\delta>0$}, there exists a large enough amplitude of the ambient vector field, $A=\delta^{-2}\M+$, and a constant \textcolor{blue}{$C$ (independent of $\delta$)},  such that $\x2+(0)-C\delta>0$ and the time evolution of \eqref{rPKS},\eqref{vector field b} pushes  the center of mass, $\x2+(t)$,  away from the critical strip, namely
\bel\label{x2+ lower bound}
\x2+(t)\geq \left[{\x2+}(0)-C\delta\right]e^{At}, \quad \textcolor{blue}{A=\frac{M_+}{\delta^2}}.
\eel
\end{lem}

\begin{lem}\label{Lem:V+/M+}
Consider the regularized PKS equation \eqref{rPKS} with background fluid velocity \eqref{vector field b}. Assume that the initial variation around the center of mass $\V+(0)$ is not too large  in the sense that \eqref{initial configuration 1} holds. Then \textcolor{blue}{for any sufficiently small $\delta>0$}, there exists a constant $C=C(V_+(0))$ such that the variation  $\V+(t)$ remains bounded from above,
\bel\label{V_+/M_+ estimate with ep and V_+/M_+_0}
\V+(t)\leq  \left[C\M+\delta+\V+(0)\right]e^{2At}.
\eel
\end{lem}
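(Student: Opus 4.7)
The strategy: rather than track $\V+$ directly (whose natural test function $(x_2-\y+(t))^2$ depends on $t$), I would rely on the identity
\[
\V+(t)=N_+(t)-\M+\,\y+^2(t),\qquad N_+(t):=\int_{\rr^2_+}x_2^2\,n^\ep(x,t)\,dx,
\]
bound $N_+(t)$ from above via the weak formulation, and then combine with the lower bound $\y+(t)\geq(\y+(0)-C\delta)e^{At}$ already supplied by Lemma~\ref{Lem:x2+}. The payoff is that the test function one actually uses, $\tfrac12 x_2^2$, is both simple and time-independent.

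Since $n^\ep$ is symmetric about the $x_1$-axis, $\int_{\rr^2}\tfrac12 x_2^2\,n^\ep\,dx=N_+(t)$. Substituting $\varphi=\tfrac12 x_2^2$ into the full-plane weak formulation \eqref{weak formulation} (so that $\Delta\varphi\equiv 1$ and $\nabla\varphi=(0,x_2)$) gives
\begin{align*}
\frac{dN_+}{dt}&=M-\frac{1}{4\pi}\iint\frac{(x_2-y_2)^2}{|x-y|^2}n^\ep n^\ep\,dxdy+A\int x_2^2\,n^\ep\,dx\\
&\leq 2\M++2A\,N_+,
\end{align*}
where the aggregation integral is non-positive (a property preserved for the regularized kernel, since $\nabla K^\ep$ is radially inward), and $\int x_2^2\,n^\ep\,dx=2N_+$.

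Integrating this linear Gronwall inequality with the prescribed $A=\M+/\delta^2$ produces $N_+(t)\leq(N_+(0)+\delta^2)e^{2At}$. Inserting $N_+(0)=\V+(0)+\M+\y+^2(0)$ and squaring the lower bound on $\y+$ (valid once $\delta$ is small enough that $\y+(0)-C\delta>0$), one gets
\begin{align*}
\V+(t)&\leq\bigl[\V+(0)+\M+\y+^2(0)+\delta^2-\M+(\y+(0)-C\delta)^2\bigr]e^{2At}\\
&=\bigl[\V+(0)+\delta^2+2C\M+\y+(0)\delta-C^2\M+\delta^2\bigr]e^{2At}\\
&\leq\bigl[\V+(0)+\widetilde{C}\M+\delta\bigr]e^{2At},
\end{align*}
which is the claimed bound; the constant $\widetilde{C}$ depends on the initial configuration (in particular on $\y+(0)$, which is already controlled by the hypothesis \eqref{initial configuration 1}).

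The main technical obstacle is that $\varphi=\tfrac12 x_2^2$ is not compactly supported, so its use in \eqref{weak formulation} must be justified by truncation: multiply by a smooth cutoff $\chi_R$ at scale $R$ and pass $R\to\infty$, exploiting the second-moment propagation that follows from $(1+|x|^2)n_0\in L^1$ together with the at-most-linear growth $|\bb(x)|\lesssim|x|$. This argument is routine moment-propagation in the Keller--Segel literature, but must be carried out uniformly in $\ep$ so that the estimate survives the cluster-point limit. A minor secondary point is the preservation of the sign of the self-aggregation term for the regularized kernel $K^\ep$; if that fails in the interpolation region, the crude bound $|\nabla K^\ep(z)|\leq\tfrac{1}{2\pi|z|}$ absorbs the term into an additive constant of size $\mathcal{O}(M^2)$ that does not affect the exponential structure of the final estimate.
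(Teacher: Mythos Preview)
Your proposal is correct and follows essentially the same strategy as the paper: both use the identity $\V+(t)=N_+(t)-\M+\y+^2(t)$, derive a Gronwall bound $N_+(t)\leq(N_+(0)+C\delta^2)e^{2At}$ for the second moment $N_+$, and then subtract the squared lower bound on $\y+(t)$ from Lemma~\ref{Lem:x2+} to recover the stated estimate on $\V+$. The only execution difference is that you plug $\varphi=\tfrac12 x_2^2$ into the full-plane weak formulation \eqref{weak formulation} (exploiting symmetry and the convenient sign of the aggregation term), whereas the paper works with the upper-half-plane formulation \eqref{weak formulation for the upper half plane}, splitting the nonlinearity into the self-interaction $II$ and the cross-interaction $III$ with $\nabla c_-$ and bounding each crudely by $\mathcal{O}(\M+^2)$; your route is marginally cleaner but leads to the identical differential inequality for $N_+/\M+=\V+/\M++\y+^2$.
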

We note that all the calculations made below should be carried out at the level of the regularized equation \eqref{rPKS}, but for the sake of simplicity, we proceed at the formal level using the weak formulation \eqref{weak formulation for the upper half plane}. We explicitly point when there is a technical subtlety  in the derivation due to difference between the regularized and weak formations.

We begin with the proof of Lemma \ref{Lem:x2+}.
To calculate the dynamics of the center of mass, one needs to formally test the equation with respect to $\varphi(x)=x_2$. To stay away from the critical strip ${\mathcal S}_\delta$, however, we introduce an approximate test function\footnote{To be precise, one should use here the usual  argument of  a further smooth cut-off for large $\varphi$ at $|x_1|+x_2\gg 1$, and recover the result uniformly with respect to that cut-off parameter.} $\varphi \approx x_2$ with a cut-off away from ${\mathcal S}_\delta$
\begin{align*}
\varphi:=\left\{\begin{array}{lll}x_2 &\quad x_2\in (2\delta, \infty),\\
0 &\quad x_2\in (-\infty, \delta),\\
smooth &\quad x_2\in (\delta, 2\delta).\end{array}\right.
\end{align*}
Note that there exists a constant $C_\varphi$ such that
$|\varphi|\leq 2\delta,\quad \forall x_2\leq 2\delta$ and
$|\na\varphi|+\delta |\na^2 \varphi|\leq {C_\varphi}$.
Here and below,  we use $C_\varphi$ to denote $\varphi$-dependent constants \textcolor{blue}{that otherwise are independent of $\delta$}.\newline
Moreover,  replacing $\ds \int_{\rr^2_+} x_2ndx$ with $\ds \int_{\rr^2_+} \varphi ndx$, we lose information on the stripe $\left\{(x_1,x_2)||x_2|\leq 2\delta\right\}$; however, the contribution of this part is small in the sense that:
\bel \label{nx contribution of the small stripe}
\left|\int_{0\leq x_2\leq 2\delta}\varphi n_+ dx-\int_{0\leq x_2\leq 2\delta} x_2 n_+ dx\right|\leq 4M_+\delta.
\eel

Next, one can use $\varphi$ and the weak formulation (\ref{weak formulation for the upper half plane}) to extract information about ${y}_+$:
\begin{align}
\frac{d}{dt}\int_{\rr_+^2}\varphi n_+dx=&\int_{\rr_+^2}\de \varphi n_+dx-\frac{1}{4\pi}\int_{\rr_+^2\times\rr_+^2}\frac{(\na\varphi(x)-\na \varphi(y))\cdot(x-y)}{|x-y|^2}n_+(x,t)n_+(y,t)dxdy\nonumber\\
&+\int_{\rr^2_+}n_+\na c_-\cdot\na \varphi dx+\int_{\rr_+^2}\na \varphi\cdot \bb n_+dx\nonumber\\
=& I+II+III+IV.\label{weak formulation to calculate x2+}
\end{align}
Now we estimate the right hand side of \eqref{weak formulation to calculate x2+} term by term. The first and second terms are relatively easy to control from above,
\begin{subequations}\label{eqs:upper}
\begin{align}
|I|&=\left|\int_{\rr^2_+}\de \varphi n_+dx\right|\leq \frac{C_\varphi M_+}{\delta},\label{I in x2+} \\
|II| &\leq \frac{1}{4\pi}|\na^2\varphi|_\infty\int\int_{\rr^2_+\times\rr^2_+}n_+(x)n_+(y)dxdy
\leq \frac{1}{4\pi}\frac{C_\varphi}{\delta}M_+^2.\label{II in x2+}
\end{align}
Next we upper-bound the third term in \eqref{weak formulation to calculate x2+} in the following way,
\bel
|III|
=\left|\int_{\rr^2_+} \pa_{x_2} c_- n_+\pa_{x_2}\varphi dx\right|
\leq \frac{C_\varphi}{2\pi}\frac{M_+}{\delta}\int_{\rr^2_+}n_+dx
\leq \frac{ C_\varphi}{2\pi}\frac{M_+^2}{\delta}.\label{III in x2+}
\eel
\end{subequations}
This follows from the  pointwise bound $\ds |\pa_{x_2} c_-(x)|\leq \frac{1}{2\pi}\frac{M_+}{|x_2|}$  shown below, and the fact that $supp(\varphi)$ is $\delta$ away from the $x_1$-axis, and hence  $\ds \frac{1}{|x_2|}\leq \frac{1}{\delta}$,
\begin{align}
|\pa_{x_2} c_-(x)|=&\frac{1}{2\pi}\left|\int_{\textcolor{blue}{(\rr^2_+)^c}}\frac{(x-y)_2}{|x-y|^2}n_-(y)dy\right| \nonumber \\
& \leq \frac{1}{2\pi}\frac{1}{|x_2|}\int_{\textcolor{blue}{(\rr^2_+)^c}}\frac{|x_2|}{|x_2|+|y_2|}n_-(y)dy
 \leq   \frac{1}{2\pi}\frac{M_-}{|x_2|}  = \frac{1}{2\pi}\frac{M_+}{|x_2|}. \label{c estimate}
\end{align}

Finally, we need to address additional transport term $IV$ in \eqref{weak formulation to calculate x2+} to compete with the focusing effect. Recall that $\bb=A(-x_1,x_2) $ with $\ds A=\frac{M_+}{\delta^2}$.
First we write $IV$ down explicitly,
\be
IV=\int \bb\cdot n_+ \na \varphi dx=\frac{M_+}{\delta^2}\int_{\rr^2_+} x_2\pa_{x_2} \varphi n_+dx.
\ee
Next we replace the right hand side by $\int \varphi n_+dx$. Due to the fact that $x_2\pa_{x_2}\varphi= x_2=\varphi$ for $x_2>2\delta$, the error introduced in this process originates  from the thin $2\delta$-strip:
\be
\left|\int_{\delta<x_2<2\delta}(x_2\pa_{x_2} \varphi-\varphi)n_+dx\right|\leq \int_{\delta<x_2<2\delta}|x_2\pa_{x_2} \varphi-\varphi|n_+dx\leq C_\varphi\delta M_+,
\ee
and we conclude that
\begin{align}
IV
\geq  \frac{M_+}{\delta^2}\left(\int_{\rr^2_+} \varphi n_+dx- C_\varphi\delta M_+\right)
\geq  \frac{M_+}{\delta^2}\int_{\rr^2_+}\varphi n_+dx-\frac{C_\varphi M_+^2}{\delta}.\label{IV in x2+}
\end{align}

Combining  equation \eqref{weak formulation to calculate x2+} with the upper-bounds \eqref{eqs:upper} and the lower-bound  \eqref{IV in x2+} \textcolor{blue}{while recalling the definition of $A$  \eqref{x2+ lower bound}}, we arrive at
\be\ba
\frac{d}{dt}\int_{\rr^2_+} \varphi n_+ dx\geq
 -C_\varphi\frac{M_+^2}{\delta}+A\int_{\rr^2_+} \varphi n_+ dx,
\ea\ee
which implies that
\bel\label{n varphi lower bound}
\int_{\rr^2_+}\varphi n_+dx\geq \left(\int_{\rr^2_+} \varphi n_0dx- C_\varphi M_+\delta\right)e^{At}.
\eel

Finally, we calculate the center of mass of the upper half plane using the lower bound \eqref{n varphi lower bound} and the error control (\ref{nx contribution of the small stripe}):
\be\ba
\x2+(t)
=&\frac{1}{\M+}\left({\int_{0\leq x_2\leq 2\delta} x_2n_+dx}-{\int_{0\leq x_2\leq 2\delta} \varphi n_+dx}+{\int_{\rr^2_+} \varphi n_+dx}\right)\\
\geq &-\frac{1}{\M+}\left|{\int_{0\leq x_2\leq 2\delta} x_2n_+dx}-{\int_{0\leq x_2\leq 2\delta} \varphi n_+dx}\right|+\frac{1}{\M+}{\int_{\rr^2_+} \varphi n_+dx}\\
\geq &-4\delta+\frac{1}{M_+}\left(\int_{\rr^2_+} \varphi n_0dx-C_\varphi M_+\delta\right)e^{At}\\
\geq &\left(\x2+(0)-C_\varphi\delta\right)e^{At}.
\ea\ee
This completes the proof of lemma \ref{Lem:x2+}.$\Box$

\begin{rmk}\label{rmk on regularization}
The only difference in  estimating  the regularized solutions \eqref{rPKS} vs. the formal calculation we have done above is in terms II and III. In the calculation for the \eqref{rPKS}, we will need the estimate
\be
|\na K^\ep (z)|\leq \frac{1}{2\pi |z|},\quad \forall z\in \rr^2.
\ee
Here we show how to get a similar estimate for term $II$ in \eqref{weak formulation to calculate x2+} for the regularized equation \eqref{rPKS}:
\be\ba
II=&\left|\iint_{\rr^2_+\times\rr^2_+} \na \varphi(x) \na_x [K^{\ep}(|x-y|)]n^\ep(y)n^\ep(x)dxdy\right|\\
=&\frac{1}{2}\left|\iint_{\rr^2_+\times\rr^2_+}\frac{(\na \varphi(x)-\na \varphi(y))\cdot(x-y)}{|x-y|}\na K^{\ep}(|x-y|)n^\ep(x)n^\ep(y)dxdy\right|\\
\leq &\frac{1}{4\pi}\left|\iint_{\rr^2_+\times\rr^2_+}\frac{|\na \varphi(x)-\na \varphi(y)|}{|x-y|}n^\ep(x)n^\ep(y)dxdy\right| \leq \frac{1}{4\pi}|\na^2\varphi|_\infty M_+^2
\leq \frac{1}{4\pi}\frac{C_\varphi}{\delta}M_+^2.
\ea\ee
The treatment of term $III$ is similar to the one we gave above.
\end{rmk}

Next we address the proof of Lemma \ref{Lem:V+/M+}.
The main goal is to calculate time evolution of the variation
\be
V_+(t):=\int_{\rr^2_+} |x_2-\x2+(t)|^2n(x,t)dx.
\ee
We again use $C$ to denote constants which may change from line to line but are independent of  $\delta$.

The first obstacle is that we cannot choose $|x_2-\x2+|^2$ as a test function due to the fact that $\x2+(t)$ depends on the solution. However, by the definition of $\y+$ we can expand the $\V+$-integrand, ending up with the usual
\begin{eqnarray}\label{V+ relation}
\V+(t)=\int_{\rr^2_+} |x_2|^2n_+(x,t)dx- \M+ {\x2+^2}(t).
\end{eqnarray}

Since we already know $\x2+$, it is enough to calculate the $\int_{\rr^2_+} |x_2|^2n(x,t)dx$. For simplicity, we plug $|x_2|^2$ inside the weak formulation \eqref{weak formulation for the upper half plane} and \eqref{V+ relation} to get the time evolution of $V_+$. Of course, what one really does is to use a test function to approximate the $|x_2|^2$. Furthermore, when we use the weak formulation, we formally integrated by part twice, but since the value and the first derivative of the function $|x_2|^2$ are zero on the boundary, we will not create extra dangerous boundary term.

First combining (\ref{weak formulation for the upper half plane}) and (\ref{V+ relation}) yields
\begin{align}
\frac{d}{dt}V_+=&\frac{d}{dt}\int_{\rr^2_+} |x_2|^2n_+(x,t)dx-\M+\frac{d}{dt} {\x2+^2}(t)\nonumber\\
=&\int_{\rr_+^2}\de \varphi n_+dx-\frac{1}{4\pi}\int_{\rr_+^2\times\rr_+^2}\frac{(\na\varphi(x)-\na \varphi(y))\cdot(x-y)}{|x-y|^2}n_+(x,t)n_+(y,t)dxdy\nonumber\\
&+\int_{\rr^2_+}\na c_-n_+\na \varphi dx+\int_{\rr_+^2}\na \varphi\cdot \bb n_+dx-\frac{d}{dt}\big( M_+ (\x2+)^2\big)\nonumber\\
=&I+II+III+IV-\M+\frac{d}{dt}{\x2+^2}(t)\label{weak formulation for V+}
\end{align}
Next we estimate every term on the right hand side of \eqref{weak formulation for V+}. The first two terms are estimated as follows:
\bel
|I|=&\ds \left|\int_{\rr_+^2}\de( x_2^2) n_+dx\right|=2M_+,\label{I in V+}
\eel
and
\begin{align}
|II|=&\left|-\frac{1}{4\pi}\int_{\rr_+^2\times\rr_+^2}\frac{(\na(x_2^2)-\na (y_2^2))\cdot(x-y)}{|x-y|^2}n_+(x,t)n_+(y,t)dxdy\right|\nonumber\\
\leq &\frac{1}{4\pi}\int_{\rr_+^2\times\rr_+^2}2 n_+(x)n_+(y)dxdy\leq \frac{1}{2\pi} M_+^2.\label{II in V+}
\end{align}
Now for the third term in \eqref{weak formulation for V+}, we use the previous upper-bound   (\ref{c estimate}), $|\pa_{x_2} c_-(x)|\leq \frac{1}{2\pi}\frac{M_+}{|x_2|}$, which yields
\bel
|III|
=\left|\int_{\rr^2_+}\pa_{x_2}c_-n_+2x_2dx\right|\leq \frac{1}{2\pi}\int_{\rr^2_+}2|x_2|\frac{M_+}{|x_2|}n_+dx\leq \frac{1}{\pi} M_+^2.\label{III in V+}
\eel
Note that for the term $II$ and $III$, we only estimate them formally above, one can prove the estimates explicitly using the same techniques as the one in Remark \ref{rmk on regularization}.
For the $IV$ term in \eqref{weak formulation for V+}, we use the (\ref{V+ relation}) again to obtain
\bel
|IV|=\left|\int_{\rr_+^2}\na x_2^2 \cdot \bb n_+dx\right|=2A\left|\int_{\rr^2_+}x_2^2 n_+dx\right|=2A(V_++M_+\x2+^2)\label{IV in V+}
\eel

Collecting equation \eqref{weak formulation for V+} and all the estimates \eqref{I in V+}, \eqref{II in V+}, \eqref{III in V+} and \eqref{IV in V+} above, we have the following differential inequality,
\be
\frac{d}{dt}\left(\frac{1}{\M+}\V+(t)+ {\x2+^2}(t)\right)\leq C(1+ M_+)+2A\left(\frac{1}{\M+}\V+(t)+{\x2+^2}(t)\right),
\ee
which yields
\begin{equation}\label{V_+/M_++(x^*)^2}
\frac{1}{\M+}\V+(t)+{\x2+^2}(t)\leq C\left(1+\frac{1}{\M+}\right)\delta^2 e^{2At}+\frac{1}{\M+}\V+(t) e^{2At}+{\x2+^2}(0) e^{2At}.
\end{equation}
Combining Lemma \ref{Lem:x2+} with (\ref{V_+/M_++(x^*)^2}) yields
\begin{align*}
\frac{1}{\M+}\V+(t)+\left[\left({\x2+}(0)-C \delta\right)e^{At}\right]^2
& \leq \frac{1}{\M+}\V+(t)+{\x2+^2}(t)\\
& \leq  C\left(1+\frac{1}{\M+}\right)\delta^2 e^{2At}+\frac{1}{\M+}\V+(0) e^{2At}+{\x2+^2}(0) e^{2At}.
\end{align*}
By collecting similar terms, we finally have
\bel
\frac{1}{\M+}\V+(t) \leq  \left[2C\delta{\x2+}(0)+C\left(1+\frac{1}{M_+}\right)\delta^2+\frac{1}{\M+}\V+(0)\right]e^{2At}.
\eel
which completes the proof of Lemma \ref{Lem:V+/M+}. $\Box$

\bigskip
Equipped with the estimate on $\V+(t)$, we can now conclude the proof of  Lemma \ref{cell distribution lemma}.

\begin{proof} (Lemma \ref{cell distribution lemma})
Once $0<\eta\ll 1$ was fixed, we can clearly choose a small enough $\delta$ such that by \eqref{V_+/M_+ estimate with ep and V_+/M_+_0}${}_\delta$,  there holds
\bel\label{V_+/M_+ upper bound}
\V+(t)\leq{(1+\eta)}\V+(0) e^{2At}.
\eel
Now recalling  that $R=\x2+(0)\sqrt{\frac{\M+}{2\V+(0)}} > 1$, then we can use (\ref{x2+ lower bound}), (\ref{V_+/M_+ upper bound}) and further choose $\delta$ small enough to get:
\begin{align*}
\x2+(t)-\frac{R}{1+\eta}\sqrt{\frac{2\V+(t)}{\M+}}\geq&\left[\x2+(0)-C \delta-\frac{R}{\sqrt{1+\eta}}\sqrt{\frac{2\V+(0)}{\M+}}\right] e^{At}\\
=&\left[\bigg(1-\frac{1}{\sqrt{1+\eta}}\bigg)\x2+(0)-C \delta\right] e^{At}\\
\geq& 2\delta e^{At}\geq 2\delta.
\end{align*}
Thus, the `thin' $\delta$-strip along the $x_1$-axis, $ {\mathcal S}_\delta:=\{(x_1,x_2)|0\leq x_2\leq 2\delta\}$, lies \emph{outside} the strip centered around $\y+(t)$, uniformly in time,
\[
{\mathcal S}_\delta \subset \{(x_1,x_2)\ |\ |x_2-\x2+(t)|>R_\eta(t)\}, \qquad R_\eta(t):=\frac{R}{1+\eta}\sqrt{\frac{2\V+(t)}{\M+}}.
\]
It follows  that thanks to our choice of $\delta$, the mass inside the $\delta$-strip $ {\mathcal S}_\delta$ does not exceed
\be\ba
\int_{{\mathcal S}_\delta}n_+(x,t)dx\leq&\int_{\rr^2_+\cap \left\{|x_2-\x2+|>R_\eta\right\}}n_+(x,t)dx \leq  \int_{\rr^2_+\cap \left\{|x_2-\x2+|>R_\eta\right\}}n_+(x,t)\frac{|x_2-\x2+|^2}{|x_2-\x2+|^2}dx\\
 \leq & \frac{(1+\eta)^2}{R^2 2\V+/\M+}\int_{\rr^2_+}n_+(x,t)|x_2-\x2+|^2dx = \frac{(1+\eta)^2}{R^22\V+/\M+}\V+\\
  \leq & \frac{(1+\eta)^2}{2R^2}M_+.
\ea\ee
 By symmetry,  the mass inside the symmetric $\delta$-strip, $\ds \{(x_1,x_2)||x_2|\leq 2\delta\}$ is smaller than $\ds \frac{(1+\eta)^2}{2R^2}2\M+=\frac{(1+\eta)^2}{2R^2}M$, uniformly in time, which completes the proof of Lemma \ref{cell distribution lemma}.
\end{proof}

\begin{rmk}
One can do a similar computation to get the evolution for the higher moment estimates $\ds\int_{\rr^2_+}n(x,t)|x|^{2k}dx$, and derive similar results to Lemma \ref{cell distribution lemma}.
\end{rmk}

\subsection{Step 2 --- proof of the main theorem under the constrained setup
\eqref{constrained setup}}\label{sec:step2}
With the Lemma \ref{cell distribution lemma} at our disposal, we can now turn to  the proof of theorem \ref{thm 1} along the lines of \cite{BCM2008}. Note that the actual calculation are to be carried out with the regularized solutions $n^\ep$ of \eqref{rPKS}, though  for the sake of simplicity, we only do the formal calculation on $n(x)=n(\cdot,t)$.

The key is to use the logarithmic Hardy-Littlewood-Sobolev inequality to get a bound on the entropy $S[n]$.
\begin{thm}[Logarithmic Hardy-Littlewood-Sobolev Inequality]\cite{CL92} Let $f$ be a nonnegative function in $L^1(\rr^2)$ such that $f\log f$ and $f\log(1+|x|^2)$ belong to $L^1(\rr^2)$. If $\int_{\rr^2}fdx=M$, then
\bel\label{log HLS}
\int_{\rr^2}f\log f dx+\frac{2}{M}\iint_{\rr^2\times\rr^2} f(x)f(y)\log|x-y|dxdy\geq -C(M)
\eel
with $C(M):=M(1+\log \pi-\log M)$.
\end{thm}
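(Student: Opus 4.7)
The approach I would take is the \emph{competing symmetries} method of Carlen--Loss, which produces the sharp constant by identifying two operators on probability densities---symmetric decreasing rearrangement and a conformal pull-back via the sphere---each of which reduces (or preserves) the functional, and whose common fixed point is the unique extremal. Writing $\Phi_M[f]$ for the left-hand side of \eqref{log HLS}, a direct scaling check shows $\Phi_M$ is invariant under the mass-preserving dilation $f\mapsto\lambda^2 f(\lambda\,\cdot)$. Setting $g:=f/M$, a short bookkeeping computation yields $\Phi_M[f]=M\,\Phi_1[g]+M\log M$, so the claim is equivalent to
\[
\Phi_1[g]\;:=\;\int g\log g\,dx+2\iint g(x)g(y)\log|x-y|\,dx\,dy\;\geq\;-(1+\log\pi)
\]
for every probability density $g$ on $\rr^2$ satisfying the integrability hypotheses.

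Next I introduce two operators. The symmetric decreasing rearrangement $\mathcal{R}:g\mapsto g^*$ preserves $\int g\log g$ (which depends only on the distribution function) and weakly decreases the log-kernel integral: writing $\log|x-y|=\log R-(\log R-\log|x-y|)_+$ for large $R$ and applying Riesz's rearrangement inequality to the nonnegative, radially decreasing truncation $(\log R-\log|x-y|)_+$ gives $\iint g^*g^*\log|x-y|\leq\iint gg\log|x-y|$ in the limit $R\to\infty$, so $\Phi_1[g^*]\leq\Phi_1[g]$. For the conformal operator $\mathcal{S}$, I transfer to the sphere via stereographic projection $\pi:\rr^2\to S^2\setminus\{N\}$: the chord identity $|\pi(x)-\pi(y)|=2|x-y|/\sqrt{(1+|x|^2)(1+|y|^2)}$ and the Jacobian $d\mu_{S^2}=\tfrac{4}{(1+|x|^2)^2}\,dx$ combine so that, with $F\circ\pi:=g\cdot(1+|x|^2)^2/4$, the conformal factors arising from the entropy and from the two half-log factors in the chord identity cancel exactly, yielding
\[
\Phi_1[g]\;=\;\int_{S^2}F\log F\,d\mu+2\iint_{S^2\times S^2}F(\xi)F(\eta)\log|\xi-\eta|\,d\mu(\xi)d\mu(\eta)\;=:\;\Psi[F].
\]
Any rotation $\tau$ of $S^2$ therefore induces a symmetry $\mathcal{S}_\tau$ of $\Phi_1$ under which the functional is \emph{invariant}; the choice interchanging north and south poles corresponds on $\rr^2$ to conformal inversion $x\mapsto x/|x|^2$.

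I now invoke the Carlen--Loss strong-convergence theorem for competing symmetries: the iterates $(\mathcal{S}\mathcal{R})^n g$ converge in $L^1$ to the unique probability density that is simultaneously radially symmetric decreasing (fixed by $\mathcal{R}$) and a constant on $S^2$ (fixed by every $\mathcal{S}_\tau$), namely
\[
g^*(x)\;=\;\tfrac{1}{\pi(1+|x|^2)^2},\qquad\text{equivalently }F^*\equiv\tfrac{1}{4\pi}\text{ on }S^2.
\]
Since $\Phi_1$ is nonincreasing along the iteration and lower semicontinuous under $L^1$ convergence, $\Phi_1[g]\geq\Phi_1[g^*]=\Psi[1/(4\pi)]$. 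A direct spherical-coordinates computation gives $\int_{S^2}\log|\xi-\eta|\,d\mu(\eta)=4\pi\log 2-2\pi$ (independent of $\xi$), whence
\[
\Psi[1/(4\pi)]\;=\;-\log(4\pi)+(2\log 2-1)\;=\;-(1+\log\pi),
\]
matching the claimed $C(1)=1+\log\pi$ and completing the proof.

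The main obstacle is the rearrangement step for the sign-indefinite log kernel: justifying the truncate-and-pass-to-the-limit argument requires the hypothesis $f\log(1+|x|^2)\in L^1$ to control the positive part of the log kernel on the tails, together with $f\log f\in L^1$ to preclude concentration pathologies under dominated convergence---these are precisely the integrability assumptions in the statement. A secondary technicality is the lower semicontinuity of $\Phi_1$ along the $L^1$-convergent iterates, needed to pass $\Phi_1[(\mathcal{S}\mathcal{R})^n g]\geq\Phi_1[g^*]$ to the limit; both points are handled exactly as in the original Carlen--Loss argument.
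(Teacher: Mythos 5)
The paper does not prove this theorem at all — it is quoted verbatim from the cited reference [CL92], and your sketch is precisely the competing-symmetries argument of that reference: the scaling reduction $\Phi_M[f]=M\Phi_1[f/M]+M\log M$, the Riesz-rearrangement/stereographic-transfer dichotomy, and the evaluation $\Phi_1\big[\tfrac{1}{\pi(1+|x|^2)^2}\big]=-(1+\log\pi)$ all check out. So your proposal is correct in outline and coincides with the proof the paper implicitly relies on (the only cosmetic caveat being that the conformal map actually used in the Carlen--Loss iteration is a rotation moving the pole to the equator, not the north--south interchange you mention in passing).
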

\begin{rmk} It is pointed out in \cite{BCM2008} that by multiplying $f$ by indicator functions, one can prove that the inequality \eqref{log HLS} remains true with $\rr^2$ replaced by any bounded domains $\mathcal{D}\subset \rr^2$.
\end{rmk}

The idea of the proof goes as follows. By observing that the mass in the upper half plane and lower half plane are subcritical ($\displaystyle{|n_\pm|_1<8\pi}$), we plan to use the logarithmic Hardy-Littlewood-Sobolev inequality on these sub-domains to get uniform bound on the entropy. However, without extra information concerning the cell density distribution, naive application of logarithmic Hardy-Littlewood-Sobolev inequality fails.  For this approach to work, the density distribution constraint required is that the cells in the upper and lower half plane are well-separated by a 'cell clear strip' in which the total number of cells is sufficiently small. The strip is constructed through applying Lemma \ref{cell distribution lemma}. Combining the logarithmic Hardy-Littlewood-Sobolev inequality and the cell seperation constraint, we can use a 'total entropy reconstruction' trick introduced in \cite{BCM2008} to obtain entropy bound.
Now let's start the whole proof.
\begin{proof}[\textcolor{blue}{Proof of Theorem \ref{thm 1}}]
According to propositions \ref{pro 2.1}, \ref{pro 1.2}, the life-span of free-energy solution is determined by the finite bound on the entropy $S[n](t)$.
 To this end, we decompose the free energy into three parts,
\bel\label{T1T2}
\ba
E[n]\equiv &\left(1-\frac{K}{8\pi}\right)\int_{\rr^2} n\log ndx  \\
& + \frac{1}{8\pi}\bigg(K\int_{\rr^2} n\log ndx+2\iint_{\rr^2\times\rr^2}n(x)n(y)\log |x-y|dxdy\bigg)-\int_{\rr^2} \textcolor{blue}{H}ndx\\
=:&\left(1-\frac{K}{8\pi}\right)S[n]+\T_1-\T_2.
\ea
\eel
Recall that  since the free energy $E[n](t)$ in \eqref{E}   is decreasing,
\be\label{eq:finally}
\left(1-\frac{K}{8\pi}\right)S[n] \equiv E[n] -\T_1+\T_2 \leq  E[n_0] -\T_1+\T_2,
\ee
then  the desired entropy bound and hence a finite entropy solution  follow provided we show the existence of a constant  $K<8\pi$, for which $\T_1=\T_1(K)$ is bounded from below and  $\T_2$ is bounded from above. Our main task is  estimating $\T_1(K)$ from below, which is further decomposed into three terms
\begin{align*}
\T_1(K) & = K\int_{\rr^2} n\log^+ ndx+2\iint_{\rr^2\times\rr^2}n(x)n(y)\log |x-y|dxdy - K\int_{\rr^2} n\log^- ndx \\
&  =: K \T_{11}+\T_{12}+ K\T_{13}.
\end{align*}
To estimate the various terms, we first  construct the 'cell clear strip', 
$\Gc$ shown in figure \ref{fig:1},
\begin{equation}
\G+:=\{x_2\, | \, x_2>2\delta\},\quad\Gm:=\{x_2\, |\, x_2<-2\delta\},\quad\Gc:=\{x_2\ | \ |x_2|\leq 2\delta\}.
\end{equation}
Thus, region $\G+$ contains points in the upper half plane which are $2 \delta$ away from the $x_1$ axis, whereas region $\Gm$ contains points in the lower half plane with the same property. Region $\Gc$ is a closed $2\delta$-neighborhood of the $x_1$-axis. The $\delta$ neighborhood of the $\G+,\Gm$ region is denoted as follows:
\begin{equation}
\G+^{(\delta)}=\{x_2 \ | \ x_2>\delta\},\quad\Gm^{(\delta)}=\{x_2\, | \, x_2<-\delta\}.
\end{equation}
In the sequel, we will further decompose $\Gc$ into subdomains:
\begin{equation}
\S+=\{x_2\,|\,\delta<x_2\leq 2\delta\},\quad \Sm=\{x_2\,|-\,\delta> x_2\geq-2\delta\}, \quad \Sc=\{x_2\ | \ |x_2|\leq \delta\}.
\end{equation}

\begin{figure}\label{fig:1}
  \centering
  \includegraphics[width=8cm]{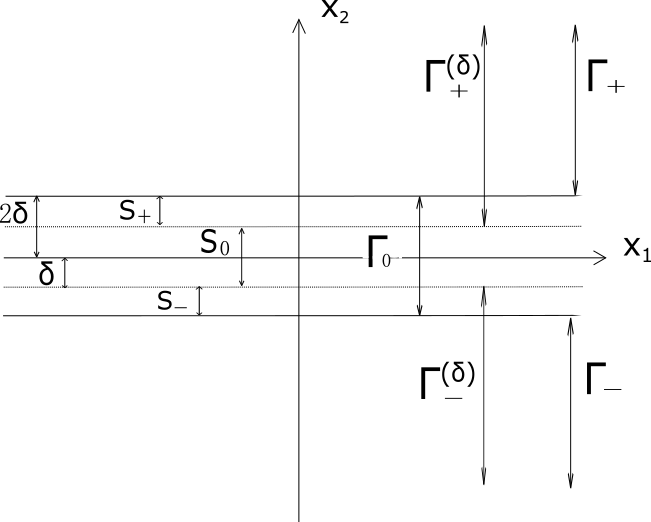}\\
  \caption{Regions $\G+,\Gm,\Gc$}
\end{figure}

\noindent
{\bf Estimating $K\T_{11}+\T_{12}$ from below}.
We fix $K$ as
\bel\label{K}
K:=\max\bigg\{\int_{\G+^{(\delta)}}ndx+\int_{\Gc}ndx,\int_{\Gm^{(\delta)}}ndx+\int_{\Gc}ndx\bigg\},
\eel
and claim that $K$ is admissible. Indeed, 
 lemma \ref{cell distribution lemma} with our choice of $R^2$ assumed large enough so that \eqref{constrained setup} holds implies  that the total mass inside $\Gc$ does not exceed
\begin{align}\label{Gamma3mass}
\int_{\Gc}ndx=\int_{\{x_2\,|\, |x_2|\leq 2\delta\}}ndx\leq \frac{(1+\eta)^2}{2R^2}M \leq 8\pi-\frac{M}{2}
\end{align}
Therefore, the $\Gc$ strip is the 'cell clear strip'; moreover, since the  upper-half plane mass equals $\frac{M}{2}$, the admissibility of $K$ follows, 
$\ds K\leq \frac{M}{2}+\frac{(1+\eta)^2}{2R^2}M < 8\pi$.\newline

The motivation for this choice of $K$ comes from the straightforward bound
\bel\label{eq:key}
\ba
\int_{\G+^{(\delta)}}n &dx\int_{\G+^{(\delta)}} n\log^+ n dx+\int_{\Gm^{(\delta)}}ndx\int_{\Gm^{(\delta)}} n\log^+ n dx+\int_{\Gc}ndx\int_{\Gc} n\log^+ n dx\\
& \leq\int_{\G+^{(\delta)}}ndx \int_{\rr^2_+}n\log ^+n dx + \int_{\Gm^{(\delta)}}ndx \int_{(\rr^2_+)^c}n\log ^+n dx+\int_{\Gc} ndx\int_{\rr^2} n\log^+ ndx\\
& \leq{\max\left\{\int_{\G+^{(\delta)}}ndx+\int_{\Gc} ndx,\int_{\Gm^{(\delta)}}ndx +\int_{\Gc} ndx\right\}}\cdot \int_{\rr^2}n\log ^+n dx =:  K\T_{11}.
\ea
\eel
Indeed, we proceed along the lines of \cite{BCM2008},  appealing to the Logarithmic Hardy-Littlewood-Sobolev inequality in the three regions $\textcolor{blue}{\G+^{(\delta)}},\Gm^{(\delta)},\Gc$ , obtaining
\begin{align*}
&\int_{\G+^{(\delta)}}n(x)dx\int_{\G+^{(\delta)}}n\textcolor{blue}{\log^+ n}dx+2\iint_{\G+^{(\delta)}\times \G+^{(\delta)}}n(x)n(y)\log |x-y|dxdy\geq C,\\
&\int_{\Gm^{(\delta)}}n(x)dx\int_{\Gm^{(\delta)}}n\textcolor{blue}{\log^+ n}dx+2\iint_{\Gm^{(\delta)}\times \Gm^{(\delta)}}n(x)n(y)\log |x-y|dxdy\geq C,\\
&\int_{\Gc}n(x)dx\int_{\Gc}n\textcolor{blue}{\log^+ n}dx+2\iint_{\Gc\times \Gc}n(x)n(y)\log |x-y|dxdy\geq C.
\end{align*}
We now sum these three inequalities: by \eqref{eq:key}, the sum of their first three terms  does not exceed $K\T_{11}$;  bookkeeping the overlap of the three domains $\G+^{(\delta)}\times \G+^{(\delta)}, \Gm^{(\delta)}\times \Gm^{(\delta)}$
and $\Gc\times \Gc$, consult figure \ref{Fig:Region_R} we find
the sum of the  second three terms is $\T_{12}$ modulo the correction $\T_{121}-\T_{122}$ below,
\bel\label{I1 I2 -I3 I4}
\ba
K\T_{11}+\T_{12} \geq &-C +4\iint_{((\G+^{(\delta)})^c\times \G+)\cup (\Gm\times(\S+\cup \Sc))}n(x)n(y)\log|x-y|dxdy\\
&-2\iint_{(\S+\times \S+)\cup(\Sm\times \Sm)}n(x)n(y)\log|x-y|dxdy\\
=:&-C+\T_{121}-\T_{122}.
\ea
\eel
Next applying the fact that $|x-y|\geq \delta$ for all $(x,y)$ in the integral domain of $\T_{121}$, we estimate the $\T_{122}$ and $\T_{122}$,
\begin{align} \T_{121}\geq &4M^2\log \delta,\nonumber\\
\T_{122}\leq &2\iint_{(\S+\times \S+)\cup(\Sm\times \Sm)}n(x)n(y)\log^
+|x-y|dxdy\nonumber\\
\leq&C\iint_{(\S+\times \S+)\cup(\Sm\times \Sm)}n(x)n(y)(1+|x|^2+|y|^2)dxdy\leq C(M^2+2M\int n(x) |x|^2 dx).\label{I3 I4}
\end{align}
Combining \eqref{I3 I4} with \eqref{I1 I2 -I3 I4} yields
$K\T_{11}+\T_{12} \geq 4M^2\log\delta-C(1+M^2+M\int n|x|^2dx)$.

\medskip\noindent
{\bf Estimating $\T_{13}$ from below}.
We recall the well-known upper bound on the negative part of the entropy, \cite{BDP2006},\cite{BCM2008}, stating that for $f$ positive function, 
\bel\label{negative part of entropy}
\int_{\rr^2}f\log^- fdx\leq \frac{1}{2}\int_{\rr^2}|x|^2fdx+\log(2\pi)\int_{\rr^2}fdx+\frac{1}{e}\leq C\left(1+M+\int n |x|^2 dx\right).
\eel
Combining \eqref{negative part of entropy} with our previous lower-bound of $K\T_{11}+\T_{12}$  yields
\bel\label{T1}\ba
\T_1(K)=K\T_{11}+\T_{12}+K\T_{13} \geq  4M^2\log\delta  -C(M+K)\left(1+M+\int n|x|^2dx\right).
\ea\eel
It remains to upper-bound  the $\T_2$ term in \eqref{T1T2}. Since  $\displaystyle{\bigg|\int Hndx\bigg|\leq A\int n|x|^2dx}$, it is therefore suffices  to show that the second moment of $n$ is bounded for any finite time. Indeed, the time evolution of the second moment can be estimated as follows,
\begin{align*}
\frac{d}{dt}\int n|x|^2dx
\leq4AM+4A\int H n(x)dx
\leq4AM +\frac{A^2}{2}\int  n|x|^2dx,
\end{align*}
and Gronwall inequality yields  the finite bound $\ds |\T_2| \leq \int n(\cdot,t) |x|^2dx\leq C(A,t)<\infty$.\newline
Finally, equipped with the lower bound on $\T_1(K)$ with $K<8\pi$ and with the upper bound on $\T_2$  we revisit \eqref{eq:finally} to conclude that there exists a constant $C=C(M,A,T)< \infty$ such that the entropy $S[n(\cdot,t)]$ is uniformly bounded (independent of $\ep$) for any finite time interval $t\in [0,T]$,
\begin{align}
S[n](t)\leq\frac{1}{(1-\frac{K}{8\pi})}\bigg(E[n_0]+C-\frac{1}{2\pi}M^2\log\delta\bigg),\quad\forall t\in[0,T].\label{entropy bound}
\end{align}
 Now by the Propositions \ref{pro 2.1}, \ref{pro 1.2}, we have that the free energy solution exists on any time interval $[0,T],\quad\forall T<\infty$.
\end{proof}

\subsection{Step 3 --- proof of the main theorem}\label{sec:step3}

In the proof of Theorem \ref{thm 1}, we see that the cell population is separated by a 'cell clear zone' near the $x_1$ axis. Since total mass in the "cell clear zone" is small, we can heuristically treat the total cell population as a union of two subgroups with subcritical mass ($<8\pi$). However, since we lack sufficiently good control over the total number of cells near the $x_1$ axis, we cannot use this idea to prove the optimal result as stated in Theorem \ref{thm 2}. The idea of proving Theorem \ref{thm 2} is that instead of considering the total cell population as the union of two subgroups separated by one fixed 'cell clear zone', we treat it as the union of three subgroups with subcritical mass, namely, the cells in the upper half plane, the lower half plane and the neighborhood of the $x_1$ axis, respectively. These three subgroups of cells are separated by two 'cell clear zones' varying in time.

The main difficulty in the proof is setting up the three new regions such that:

1. mass inside each region is smaller than $8\pi$;

2. the total mass of cells near their boundaries is well-controlled.

Once the construction is completed, the remaining steps will be similar to step 2.

\begin{proof}[Proof of Theorem \ref{thm 2}]
We start by constructing the three regions. First we note that the Lemma \ref{cell distribution lemma} implies that there exists $\delta>0$ such that the following estimate is satisfied for a fixed $R>1$ and $\eta$ chosen small enough:
\bel\label{mass in the strip M+}
\int_{|x_2|\leq 2\delta}n dx\leq \frac{(1+\eta)^2}{\myr{2}R^2} \int_{\rr^2}ndx\leq \frac{1}{2}M,\quad\forall t>0.
\eel
Now the region $L=\{(x_1,x_2)||x_2|\leq 2\delta\}$ have  total mass less than $\frac{1}{2}M=M_+<8\pi$ for all time.

Secondly, we subdivide the region $L$ into $J$ pieces:
\be\ba
L={\displaystyle \mathop{\cup}_{1}^{J}} L^{i}, \qquad 
L^i:=\left\{(x_1,x_2)\ \big|\ \frac{2\delta}{J}(i)> |x_2|\geq\frac{2\delta}{J}(i-1)\right\}.
\ea\ee
Here $J=J(M)\geq 10$, to be determined later, depends on $M$. By the pigeon hole principle, there is at least three strips $L^i$ such that
\be
\int_{L^i}n(x)dx\leq \frac{2}{J}M_+.
\ee
Suppose there are only two strips with mass smaller than $\frac{2}{J}M_+$, then total mass in $L$ will be bigger than $(J-2)\frac{2}{J} M_+>M_+$, a contradiction. Now we pick from these three strips the one which is neither $L^1$ nor $L^J$. As a result, this strip $L^i$ does not touch the $x_1$ axis nor the boundary of $L$. We denote this $i$ by $i^*$. The $L^{i^*}$ is the 'cell clear zone'. Notice that here $i^*=i^*(n,t)$ depends on time.

Finally, we use this $i^*$ to define the regions. First we define the three regions, each of which has total mass smaller than $8\pi$:
\[
\G+=\bigg\{x_2\geq \frac{2\delta}{J}i^*\bigg\},\quad \Gm=\bigg\{x_2\leq -\frac{2\delta}{J}i^*\bigg\},\quad\Gc=\bigg\{|x_2|\leq \frac{2\delta}{J}(i^*-1)\bigg\}.
\]
Next we set
\[
\rho=\frac{2\delta}{3J},
\]
and define the $\rho$ neighborhood of the above three regions:
\begin{equation}
\G+^{(\rho)}=\left\{x_2> \frac{2\delta}{J}(i^*-\frac{1}{3})\right\}\quad\Gm^{(\rho)}=\left\{x_2< -\frac{2\delta}{J}(i^*-\frac{1}{3})\right\},\quad \Gc^{(\rho)}=\left\{|x_2|< \frac{2\delta}{J}(i^*-\frac{2}{3})\right\}.
\end{equation}
Now we define the complement $\Gw$ of the above three regions $\Gamma_i^{(\rho)},i=+,-,0$:
\begin{align*}
\Gw=\bigg\{\frac{2\delta}{J}\left(i^*-\frac{2}{3}\right)\leq|x_2|\leq\frac{2\delta}{J}\left(i^*-\frac{1}{3}\right)\bigg\}=\Gamma_{\omega+}\cup \Gamma_{\omega-}, \quad 
\Gamma_{\omega\pm}=\Gw\cap \rr^2_{\pm}.
\end{align*}
Now we define the complement $\Gw^{(\rho)}$ of $\cup_{i\in\{\pm,0\}}\Gamma_i$ and decompose it into subdomains:
\begin{align*}
\Gw^{(\rho)}=&\big(\cup_{i\in\{\pm,0\}}\Gamma_i\big)^c=\big(\Gamma_{\omega+}^{(\rho)}\big)\cup\big(\Gamma_{\omega-}^{(\rho)}\big),\qquad \Gamma_{\omega\pm}^{(\rho)}=\Gw^{(\rho)}\cap\rr^2_\pm,\\
\Gw^{(\rho)}=&\Gw\cup \S+\cup \Sm\cup \Sc, \qquad \qquad  \qquad \qquad
\S+=\G+^{(\rho)}\backslash \G+,\quad \Sm=\Gm^{(\rho)}\backslash \Gm,\quad \Sc=\Gc^{(\rho)}\backslash \Gc.
\end{align*}

\begin{figure}
  \centering
  \includegraphics[width=7cm]{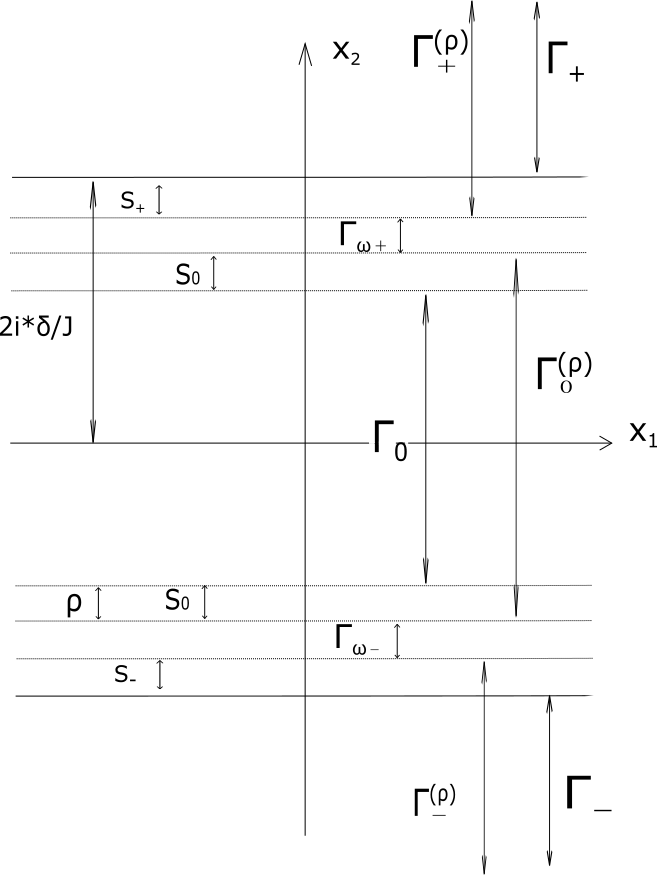}\\
  \caption{Regions $\G+,\Gm,\Gc$ and $\Gw$ in the proof of the main theorem}\label{Fig:G+mc}
\end{figure}

\begin{rmk}
It is important to notice that the regions we are constructing \textcolor{blue}{are} changing with respect to the given time $t$. Therefore, by doing the argument below, we can only show that the entropy is bounded at time $t$, but since $t$ is an arbitrary finite time, we have the bound on entropy for $\forall t\in[0,T],\forall T<\infty$.
\end{rmk}

We start estimating the entropy. By the free energy dissipation, we obtain
\begin{align}
E[n_0]\geq &\bigg(1-\frac{K}{8\pi}\bigg)\int_{\rr^2} n\log ndx\nonumber \\
& +\frac{1}{8\pi}\bigg(K\int_{\rr^2} n\textcolor{blue}{\log^+} ndx
+2\iint_{\rr^2\times\rr^2}n(x)n(y)\log |x-y|dxdy\bigg)\nonumber \\
& -\left(\frac{K}{8\pi}\int_{\rr^2} n\log^- ndx+\int_{\rr^2} Hndx\right)\nonumber\\
=:&\left(1-\frac{K}{8\pi}\right)S[n(T)]+\T_1(K)-\T_2(K)\label{T1T2_2}.
\end{align}
To derive entropy bound, we need to estimate $\T_1$ from below for $K<8\pi$ and estimate $\T_2$ from above. We start by estimating $\T_1$. Combining the definition of $L^{i^*}$ and \eqref{mass in the strip M+} yields
\begin{align}
\int_{\G+^{(\rho)}}ndx\leq &M_+=\frac{1}{2}M<8\pi,\\
\int_{\Gm^{(\rho)}}ndx\leq &M_+=\frac{1}{2}M<8\pi,\\
\int_{\Gc^{(\rho)}}ndx\leq &M_+=\frac{1}{2}M<8\pi,\\
\int_{\Gw^{(\rho)}}ndx\leq &\int_{L^{i^*}}ndx\leq \frac{2}{J}(M_+).\label{Ga 4 mass}
\end{align}
Now by the log-Hardy-Littlewood-Sobolev inequality \eqref{log HLS}, we have that
\begin{align*}
\int_{\Gamma_i^{(\rho)}}n(x)dx\int_{\Gamma_i^{(\rho)}}n(x)\log^+ n(x)dx+2\iint_{\Gamma_i^{(\rho)}\times\Gamma_i^{(\rho)}}n(x)n(y)\log |x-y|dxdy
&\geq -C,\quad i=+,-,0,\\
\int_{\Gamma_{\omega\pm}^{(\rho)}}n(x)dx\int_{\Gamma_{\omega\pm}^{(\rho)}}n(x)\log^+ n(x)dx+2\iint_{\Gamma_{\omega\pm}^{(\rho)}\times\Gamma_{\omega\pm}^{(\rho)}}n(x)n(y)\log |x-y|dxdy
& \geq -C.
\end{align*}
Same as in subsection 3.3, we recall the definition of $\T_1$ \eqref{T1T2_2}, and use the estimates above to \textcolor{blue}{reconstruct} the entropy and the potential on the whole $\rr^2$ as follows:
\begin{align}
-C\leq&\left(K\int_{\rr^2}n(x)\log^+n(x)dx+2\iint_{\rr^2\times \rr^2}n(x)n(y)\log|x-y|dxdy\right)\nonumber\\
&-2\iint_{\Omega}n(x)n(y)\log|x-y|dxdy\nonumber\\
&+2\iint_{(\S+\times \S+)\cup(\Sm\times \Sm)\cup(\Sc^+\times \Sc^+)\cup (\Sc^-\times \Sc^-)}n(x)n(y)\log|x-y|dxdy\nonumber\\
=:&8\pi\T_1-\T_3+\T_4.\label{I1 I2 -I3 I4 2}
\end{align}
The region $\Omega$\footnote{ Region $\Omega$ is the union of the following nine regions, $(1)-(9)$:
\be\ba
\qquad 1)&\G+\times (\G+^{(\rho)})^c,\quad 2)\S+\times (\G+\cup \Gamma_{\omega+}^{(\rho)})^c,\quad 3)\Gamma_{\omega+}\times(\Gamma_{\omega+}^{(\rho)})^c,\quad
4)\Sc^+\times (\Gc^{(\rho)}\cup \Gamma_{\omega+}^{(\rho)})^c,\quad 5)\Gc\times(\Gc^{(\rho)})^c,\\ 
\qquad 6)&\Sc^-\times(\Gc^{(\rho)}\cup \Gamma_{\omega-}^{(\rho)})^c,\quad
7)\Gamma_{\omega-}\times (\Gamma_{\omega-}^{(\rho)})^c,\quad 8)\Sm\times(\Gm^{(\rho)}\cup\Gamma_{\omega-}^{(\rho)})^c,\quad 9)\Gm\times(\Gm^{(\rho)})^c.
\ea\ee}
 and the integral domain of $\T_4$ are indicated in Figure \ref{Fig:Region_R}.
The $K$ in \eqref{I1 I2 -I3 I4 2} can be estimated using \eqref{Ga 4 mass} as follows
\bel\label{K 2}
K:=M_++\int_{\Gw^{(\rho)}}ndx\leq \bigg(1+\frac{2}{J}\bigg)M_+.
\eel
By the assumption $M_+<8\pi$, we can make $J$ big such that $K<8\pi$. This is where we choose the $J=J(M)$.
\begin{figure}
  \centering
  \includegraphics[width=7cm]{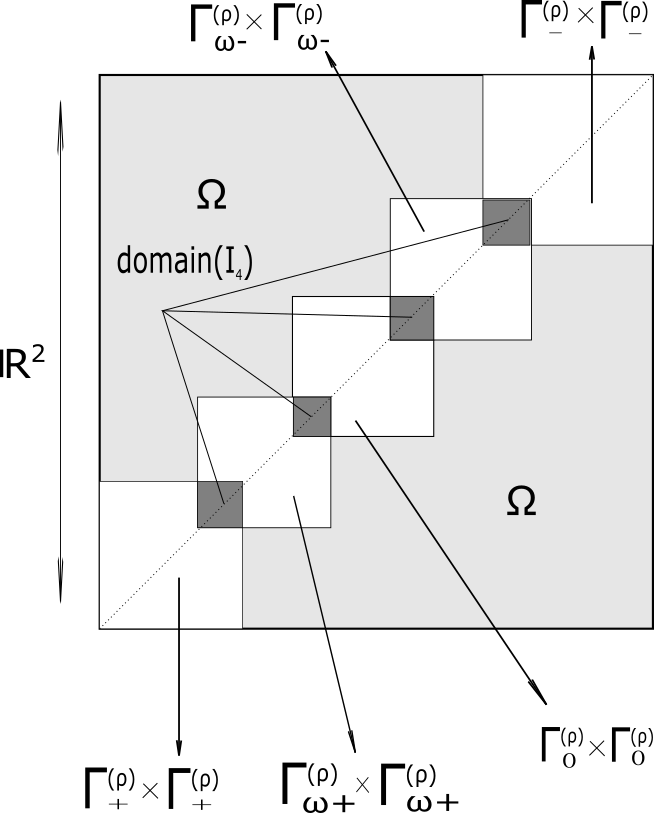}\\
  \caption{Region $\Omega$ in $\rr^2\times \rr^2$}\label{Fig:Region_R}
\end{figure}
Applying the fact that $|x-y|\geq \frac{2\delta}{3J}$, $\forall (x,y)\in \Omega$, the $\mathcal{I}_3$ and $\mathcal{I}_4$ terms in \eqref{I1 I2 -I3 I4 2} can be estimated as follows:
\begin{align}
\mathcal{I}_3\geq &CM^2\log\frac{2\delta}{3J},\nonumber\\
\mathcal{I}_4\leq&2\iint_{(\S+\times \S+)\cup(\Sm\times \Sm)\cup(\Sc^+\times \Sc^+)\cup(\Sc^-\times \Sc^-)}n(x)n(y)\log^+|x-y|dxdy\nonumber\\
\leq &C\iint_{(\S+\times \S+)\cup(\Sm\times \Sm)\cup(\Sc^+\times \Sc^+)\cup(\Sc^-\times \Sc^-)}n(x)n(y)(1+|x|^2+|y|^2)dxdy\nonumber\\
\leq &C(M^2+M\int n|x|^2dx).\label{I3 I4 2}
\end{align}
Combining \eqref{I1 I2 -I3 I4 2}, \eqref{K 2} and \eqref{I3 I4 2} yields
\bel\ba\label{T1_2}
\T_1
\geq &CM^2\log\frac{2\delta}{3J}-C(1+M^2+M\int n |x|^2dx).
\ea\eel
We estimate the $\mathcal{I}_2$ term in \eqref{T1T2_2} using \eqref{negative part of entropy} and the second-moment bound of $n$ as in the proof of \eqref{entropy bound}
\begin{align}\label{I_2}
\T_2(t)\leq C(M,A,T)<\infty,\quad \forall t\leq T.
\end{align}
Combining \eqref{T1T2_2}, \eqref{T1_2}, \eqref{I_2} and the second moment bound of $n$, we obtain that 
\bel\ba
S[n](T)
\leq  \frac{1}{(1-\frac{ K}{8\pi})}\bigg(E[n_0]+C(M,A,T)
-CM^2\log\frac{2\delta}{3J}\bigg)
<\infty, \qquad \forall T<\infty.
\ea\eel
Once the entropy is bounded for any finite time, the existence is guaranteed by Proposition \ref{pro 2.1} and Proposition \ref{pro 1.2}.
\end{proof}

\appendix
\section{}
In the appendix, we prove the two local existence theorems stated in section 2.2. The proof follows the same line as the analysis in \cite{BCM2008}.

\subsection{Proof of proposition \ref{pro 2.1}}
\begin{proof}
For any fixed positive $\ep$, following the argument as in section 2.5 of \cite{BDP2006}, we obtain the global solution in $L^2([0,T], H^1)\cap C([0,T], L^2)$ for the regularized Keller-Segel system with advection (\ref{rPKS}).

The goal is to use the Aubin-Lions Lemma to show that the solutions $\{n^\ep\}_{\ep\geq 0}$ is precompact in certain topology. Same as in the paper \cite{BCM2008}, we divide the the proof in steps.

\textbf{Step 1.} A priori estimates on $n^\ep$ and $c^\ep$. In this step, we derive several estimates which we will need later.

First we estimate the second moment
\be
V:=\int_{\rr^2}n^\ep |x|^2dx.
\ee
The time evolution of $V$ can be estimated using the regularised equation \eqref{rPKS} and the fact that the gradient of the regularized kernel $K^\ep$ is bounded $\displaystyle{|\na K^\ep (z)|\leq \frac{1}{2\pi |z|}}$:
\begin{align*}
\frac{d}{dt}V= &4M+\iint_{\rr^2\times\rr^2}n^\ep(x,t)n^\ep(y,t)(x-y)\na K^\ep(x-y)dxdy+2\int x\cdot \bb n^\ep(x)dx\\
\leq& 4M+2C\int  n^\ep |x|^2 dx.
\end{align*}
By Gronwall, we have that
\bel\label{V bound}
V(t)\leq \frac{4M}{2C}e^{2CT}+V_0\leq C_V(T)<\infty,\qquad \forall 0\leq t\leq T,
\eel
from which we obtain that $(1+|x|^2)n^\ep \in L^\infty([0,T],L^1)$ uniformly in $\ep$.

Next, we estimate $|n^\ep\log n^\ep|_{L^\infty([0,T];L^1)}$ and $|\int n^\ep c^\ep dx|_{L_t^\infty}$. Combining the assumption of the proposition \ref{pro 2.1} and \eqref{negative part of entropy} yields
\bel\label{nlogn L1}
\int |n^\ep\log n^\ep | dx\leq \int n^\ep(\log n^\ep +|x|^2)dx+2\log(2\pi)M+\frac{2}{e}.
\eel
Therefore, we proved that $n^\ep \log n^\ep \in L^\infty([0,T], L^1)$ uniformly in $\ep$. Recalling the boundedness of the second moment \eqref{V bound} and the representation of $c^\ep$ as $c^\ep =K^\ep* n^\ep$,  we deduce the following estimate using the Young's inequality $ab\leq e^{a-1}+b\ln b$ for $\forall a,b\geq 1$,
\begin{equation}
|c^\ep|(x)=\bigg|\int K^\ep(x-y)n^\ep(y)dy\bigg|\leq C(M,V,|n^\ep\log n^\ep|_{L^\infty([0,T], L^1)})+C(M)\log(1+|x|)
\end{equation}
uniformly in $\ep$. Combining this with the mass conservation of $n^\ep$ and the second moment control \eqref{V bound}, we deduce that
\bel\label{n_c_L_t_infty_estimate}
\left| \int n^\ep c^\ep dx\right|_{L^\infty([0,T])}\leq C,
\eel
where $C$ is independent of $\ep$.

Next we derive the main a priori estimate, namely, the $L^2([0,T]\times\rr^2)$ estimate of $\sqrt{n^\ep}\na c^\ep$.
First we calculate the time evolution of $\int n^\ep c^\ep dx$:
\be\ba
\frac{1}{2}\frac{d}{dt}\int n^\ep c^\ep dx=&\int n^\ep_t c^\ep dx\\
=&\int (\de n^\ep -\na\cdot(\na c^\ep n^\ep)-b\cdot \na n^\ep)c^\ep dx\\
=&\int n^\ep \de c^\ep dx+\int n^\ep|\na c^\ep|^2dx+\int \bb n^\ep\cdot \na c^\ep dx.
\ea\ee
Integrating this in time yields:
\begin{eqnarray}\nonumber
\lefteqn{\int_0^T\int n^\ep|\na c^\ep|^2 dxdt} \\
& & \ds =\frac{1}{2}\int n^\ep c^\ep dx(T)-\frac{1}{2}\int n^\ep c^\ep dx(0) -\int_0^T\int n^\ep \de c^\ep dxdt-\int_0^T\int n^\ep \bb \cdot \na c^\ep dxdt \label{pa t n c}
\end{eqnarray}
Now we estimate the right hand side of \eqref{pa t n c}. We see from \eqref{n_c_L_t_infty_estimate} that the first two terms on the right hand side is bounded. Next we estimate the third term on the right hand side of \eqref{pa t n c}, which requires information derived from the entropy bound. By the property that $\na\cdot \bb=0$, we formally calculate the time evolution of $S[n](t)$ as follows,
\bel\label{pa t S}
\frac{d}{dt}S[n](t)=-4\int |\na\sqrt{n}|^2dx+\int n^2(t)dx.
\eel
The interested reader is referred to \cite{BDP2006} for more details. We need to estimate the second term in \eqref{pa t S}. Before doing this, note that for $K>1$,
\bel\label{eta K}
\int_{n\geq K}ndx\leq \frac{1}{\log(K)} \int n_+\log ndx \leq \frac{C}{\log(K)}=:\eta(K)
\eel
can be made arbitrarily small. Now we can use the Gagliardo-Nirenberg-Sobolev inequality together with \eqref{eta K} to estimate the second term in \eqref{pa t S}  as follows:
\begin{align*}
\int n^2dx\leq& MK+\int_{n\geq K}n^2 dx \leq MK+\bigg(\int_{n\geq K}n dx\bigg)^{1/2} |n|_3^{3/2}\\
\leq &MK+\eta(K)^{1/2}C M^{1/2}|\na \sqrt{n}|_2^{2}.
\end{align*}
Combining this with \eqref{pa t S} and \eqref{eta K}, we have
\bel
\frac{d}{dt}S[n](t)=-(4-\eta(K)^{1/2}CM^{1/2})\int |\na\sqrt{n}|^2dx+MK.
\eel
The factor $-(4-\eta(K)^{1/2}CM^{1/2})$ can be made non-positive for $K$ large enough and therefore we have that
\be\ba
\int_0^T\int|\na\sqrt{n}|^2 dxdt\leq& \frac{S[n](0)-S[n](T)+MKT}{(4-2 \eta(K)^{1/2}M^{1/2}C)}.
\ea\ee
It follows that $\na\sqrt{n}$ is bounded in $L^2([0,T]\times\rr^2)$. The derivation for $|\na \sqrt{n^\ep}|_{L^2([0,T];L^2)}\leq C$ is similar but more technical, and the interested readers are referred to \cite{BDP2006} for more details.
As a consequence of the $L^2([0,T]\times\rr^2)$ estimate on $\na \sqrt{n^\ep}$ and of the computation
\be
\frac{d}{dt}S[n^\ep](t)=-4\int |\na\sqrt{n^\ep}|^2dx+\int n^\ep (-\de c^\ep)dx,
\ee
we have the estimate
\begin{equation}\label{n_de_c_time_integral_bound}
\int_0^T\int n^\ep( -\de c^\ep )dxdt\leq C.
\end{equation}
This completes  the treatment of the third term on the right hand side of \eqref{pa t n c}.  Next we estimate the last term $\int_0^T\int \bb n^\ep\cdot \na c^\ep dxdt$ in \eqref{pa t n c}. First we calculate the time evolution of $\int G n^\ep dx$:
\be\ba
\frac{d}{dt}\int G n^\ep dx=&\int G \na\cdot(\na n^\ep -\na c^\ep n^\ep-\bb n^\ep)dx\\
=&\int \de G n^\ep dx +\int \na G\cdot \na c^\ep n^\ep dx+\int |\bb|^2n^\ep dx\\
=&\int \bb\cdot \na c^\ep n^\ep dx+\int|\bb|^2n^\ep dx.
\ea\ee
Now integrating in time, we obtain
\bel
\bigg|\int_0^T\int \bb\cdot\na c^\ep n^\ep dx\bigg|\leq \bigg|\int G n^\ep dx(0)\bigg|+\bigg|\int G n^\ep dx(T)\bigg|+\bigg|\int_0^T\int |b|^2n^\ep dxdt\bigg|.\label{b na c n}
\eel
From the assumption $|\bb|\lesssim |x|$, we have that  the right hand side of \eqref{b na c n} can be bounded in terms of the second moment $V$:
\begin{equation}
\bigg|\int G n^\ep dx\bigg|+ \bigg|\int |\bb|^2 n^\ep dx \bigg|\leq C \int n^\ep |x|^2 dx.
\end{equation}
Since the second moment $V$ is bounded \eqref{V bound}, we have that
\begin{equation}\label{last_term_in_pa_t_n_c}
\left|\int_0^T\int \bb \cdot \na c^\ep n^\ep dx\right|\leq C.
\end{equation}
Applying estimates \eqref{n_c_L_t_infty_estimate}, \eqref{n_de_c_time_integral_bound} and \eqref{last_term_in_pa_t_n_c} to \eqref{pa t n c}, we obtain
\begin{equation}\label{sqrt_n_na_c_L_t2_Lx2_bound}
\int_0^T\int n^\ep|\na c^\ep |^2dxdt\leq C<\infty.
\end{equation}
This concludes our first step.

\textbf{Step 2-} Passing to the limit. As in \cite{BCM2008}, the following Aubin-Lions compactness lemma is applied:
\begin{lem}[Aubin-Lions lemma]\cite{BCM2008}
Take $T> 0$ and $1<p<\infty$. Assume that $(f_n)_{n\in \mathbb{N}}$ is a bounded sequence of functions in $L^p([0,T]; H)$ where $H$ is a Banach space. If $(f_n)_n$ is also bounded in $L^p([0,T];V)$ where $V$ is compactly imbedded in $H$ and $\pa f_n/\pa_t \in L^p([0,T];W)$ uniformly with respect to $n\in \mathbb{N}$ where $H$ is imbedded in $W$, then $(f_n)_{n\in \mathbb{N}}$ is relatively compact in $L^p([0, T]; H)$.
\end{lem}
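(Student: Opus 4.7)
The plan is to prove this along the classical Simon-Ehrling lines: an interpolation inequality lifts compactness from the weaker space $L^p([0,T]; W)$ to the target space $L^p([0,T]; H)$, and compactness in the former is checked through the Fréchet-Kolmogorov (or Kolmogorov-Riesz) criterion.

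First I would invoke Ehrling's lemma. Since $V$ is compactly embedded in $H$ and $H$ is continuously embedded in $W$, for every $\epsilon > 0$ there exists a constant $C_\epsilon > 0$ such that
\[
\|u\|_H \leq \epsilon \|u\|_V + C_\epsilon \|u\|_W \qquad \text{for every } u \in V.
\]
Applying this pointwise in $t$ to $f_n(t) - f_m(t)$ and integrating in $L^p([0,T])$ yields
\[
\|f_n - f_m\|_{L^p([0,T]; H)} \leq \epsilon \bigl(\|f_n\|_{L^p([0,T]; V)} + \|f_m\|_{L^p([0,T]; V)}\bigr) + C_\epsilon \|f_n - f_m\|_{L^p([0,T]; W)}.
\]
By hypothesis the first bracket is bounded uniformly in $n,m$, so it remains to extract a subsequence that is Cauchy in $L^p([0,T]; W)$.

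Next, to obtain such a subsequence I would invoke the vector-valued Fréchet-Kolmogorov criterion, which in this setting demands two properties: (i) pointwise-in-time relative compactness of $\{f_n(t)\}$ in $W$ for a.e.\ $t$, and (ii) uniform smallness of time translations in $L^p([0,T]; W)$. Property (i) is immediate from the hypotheses: for a.e.\ $t$ the set $\{f_n(t)\}$ is bounded in $V$, hence relatively compact in $H$ by the compact embedding, hence also in $W$ through the continuous embedding $H \hookrightarrow W$. For property (ii) I would use the $L^p$-bound on $\pa_t f_n$ together with Hölder's inequality and Fubini's theorem to estimate
\[
\int_0^{T-h} \|f_n(t+h) - f_n(t)\|_W^p \, dt \leq h^{p-1} \int_0^{T-h}\!\!\int_t^{t+h} \|\pa_s f_n(s)\|_W^p \, ds \, dt \leq h^p \, \|\pa_t f_n\|_{L^p([0,T]; W)}^p,
\]
which vanishes as $h \to 0^+$ uniformly in $n$.

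Combining (i) and (ii) produces, via a diagonal extraction, a subsequence $f_{n_k}$ that is Cauchy in $L^p([0,T]; W)$. Feeding this back into the Ehrling estimate above and letting $\epsilon \to 0$ after $k,\ell \to \infty$ shows that $f_{n_k}$ is also Cauchy in $L^p([0,T]; H)$, which is the claimed relative compactness. The main technical subtlety lies in the vector-valued Fréchet-Kolmogorov step: the scalar-valued statement does not transfer verbatim, and the sharpest Banach-valued version (due to Simon, \emph{Ann.\ Mat.\ Pura Appl.}\ 146 (1987)) requires precisely the pointwise-in-time relative compactness supplied here by the chain $V \hookrightarrow\hookrightarrow H \hookrightarrow W$. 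Once this structural point is granted, the rest of the argument is a routine combination of Ehrling interpolation and equicontinuity in time.
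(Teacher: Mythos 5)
The paper never proves this lemma: it is quoted verbatim from \cite{BCM2008} (it is Simon's classical compactness theorem), so the only comparison available is with the standard literature proof, whose skeleton — Ehrling interpolation plus a uniform time-translation estimate obtained from the bound on $\pa_t f_n$ — you have reproduced correctly. Your Ehrling step and the H\"older/Fubini computation giving $\int_0^{T-h}\|f_n(t+h)-f_n(t)\|_W^p\,dt\leq h^p\|\pa_t f_n\|_{L^p([0,T];W)}^p$ are both fine.

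The gap is in your step (i). Boundedness of $(f_n)$ in $L^p([0,T];V)$ does \emph{not} imply that for a.e.\ fixed $t$ the set $\{f_n(t)\}_n$ is bounded in $V$: take $f_n=n^{1/p}\mathbf{1}_{E_n}\,v$ with $\|v\|_V=1$, $|E_n|=1/n$, and the intervals $E_n$ sweeping cyclically through $[0,T]$; then $\|f_n\|_{L^p([0,T];V)}\leq C$ while $\sup_n\|f_n(t)\|_V=\infty$ for every $t$, since each $t$ lies in infinitely many $E_n$. So the pointwise-in-time relative compactness you feed into the vector-valued Fr\'echet--Kolmogorov criterion is not "immediate from the hypotheses", and as written the extraction of a Cauchy subsequence in $L^p([0,T];W)$ does not go through. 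The standard repair — and the one in Simon's paper that you invoke — is to replace pointwise sections by time averages: the sets $\bigl\{\int_{t_1}^{t_2} f_n\,dt\bigr\}_n$ are bounded in $V$ by H\"older, hence relatively compact in $H$ and in $W$, and Simon's criterion (relative compactness of such averages together with uniform smallness of translations) yields relative compactness directly in $L^p([0,T];H)$, making the detour through $W$ and the final diagonal argument unnecessary; alternatively, mollify in time and note that $\frac{1}{h}\int_t^{t+h}f_n(s)\,ds$ is bounded in $V$ uniformly in $n$ and $t$ for each fixed $h$, then apply Arzel\`a--Ascoli. With either correction your Ehrling step closes the argument as intended.
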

Our goal now is to find the appropriate spaces $V,H,W$ for $n^\ep$. We subdivide the proof into steps, each step determines one space in the lemma.

\textbf{Space $H$: Bound on $|n^\ep|_{L^2([0,T], L^2)}$:} We can estimate the $|n^\ep|_2^2$ by applying the following decomposition trick:
\begin{equation}\label{n ep decomposition}
n^\ep=(n^\ep-K)_++\min\{n^\ep, K\}.
\end{equation}
The second part in \eqref{n ep decomposition} is bounded in $L^p,1\leq p\leq \infty$. The first part is bounded in $L^1$ by
\begin{align}\label{n-K+ 1}
|(n^\ep-K)_+|_1\leq \frac{|n^\ep\log n^\ep|_1}{\log K}=\eta(K),
\end{align}
which can be made arbitrary small. Now we estimate the time evolution of $\int (n^\ep-K)_+^pdx,1< p<\infty$ as follows:
\begin{align}
\frac{1}{p}\frac{d}{dt}\int& (n^\ep-K)_+^pdx\nonumber\\
=&\int (n^\ep-K)_+^{p-1}(\de n^\ep -\na\cdot(\na c^\ep n^\ep)-\bb\cdot \na n^\ep)dx\nonumber\\
=&-\frac{4(p-1)}{p^2}\int |\na(n^\ep-K)_+^{p/2}|^2dx-\int (n^\ep-K)_+^{p-1}\na\cdot(\na c^\ep n^\ep)dx-\int (n^\ep-K)_+^{p-1}\bb\cdot \na (n^\ep-K)_+ dx\nonumber\\
\leq&-\frac{4(p-1)}{p^2}\int |\na (n^\ep-K)_+^{p/2}|^2dx-\frac{p-1}{p}\int (n^\ep-K)_+^p\de c^\ep dx-K\int (n^\ep-K)_+^{p-1}\de c^\ep dx\nonumber\\
=:&-\frac{4(p-1)}{p^2}\int |\na (n^\ep-K)_+^{p/2}|^2dx+\T_1+\T_2.\label{n ep-K+ Lp}
\end{align}
Note that since $\na\cdot \bb\equiv 0$, the term involving $\bb$ vanishes. For the $T_1$ term in \eqref{n ep-K+ Lp}, using the facts that $\de c^\ep=\de K^\ep*n^\ep$ and $|\de K^\ep|_1$ is uniformly bounded,  we can estimate it as follows:
\begin{align}
\T_1\leq& \int (n^\ep-K)_+^p  |\de K^\ep|*(n^\ep-K)_+dx+K\int (n^\ep-K)_+^p dx|\de K^\ep|_1\nonumber\\
\leq& |(n^\ep-K)_+|_{p+1}^{p}||\de K^\ep|*(n^\ep-K)_+|_{p+1}+CK\int (n^\ep-K)_+^pdx\nonumber\\
\leq& C|(n^\ep-K)_+|_{p+1}^{p+1}+CK|(n^\ep-K)_+|_p^p.\label{T1_nep-K+_Lp}
\end{align}
Similarly, we can estimate the $T_2$ term in \eqref{n ep-K+ Lp} as follows:
\begin{equation}
\T_2\leq CK|(n^\ep-K)_+|_{p}^{p}+CK^2|(n^\ep-K)_+|_{p-1}^{p-1}.\label{T2_nep-K+_Lp}
\end{equation}
Combining \eqref{n ep-K+ Lp}, \eqref{T1_nep-K+_Lp} and \eqref{T2_nep-K+_Lp}, we obtain
\begin{align}
\frac{1}{p}\frac{d}{dt}\int &(n^\ep-K)_+^pdx\leq -\frac{4(p-1)}{p^2}\int |\na (n^\ep-K)_+^{p/2}|^2dx\nonumber\\
&+ C|(n^\ep-K)_+|_{p+1}^{p+1}+CK|(n^\ep-K)_+|_p^p+CK^2|(n^\ep-K)_+|_{p-1}^{p-1}.\label{time_evolution_of_nep-K+_Lp}
\end{align}
For the highest order term $C|(n^\ep-K)_+|_{p+1}^{p+1}$ in \eqref{time_evolution_of_nep-K+_Lp}, we use the following Gagliardo-Nirenberg-Sobolev inequality:
\begin{equation}
\int_{\rr^2} f^{p+1}dx\leq C \int_{\rr^2}|\na (f^{p/2})|^2dx\int_{\rr^2}f dx,\quad f\geq 0
\end{equation}
together with \eqref{n-K+ 1} to estimate it as follows
\begin{align}\label{estimate on the highest nonlinearity}
|(n^\ep-K)_+|_{p+1}^{p+1}\leq C |\na((n^\ep-K)_+^{p/2})|_2^2|(n^\ep-K)_+|_1\leq C\eta(K)|\na((n^\ep-K)_+^{p/2})|_2^2.
\end{align}
We can take $K$ big such that it is absorbed by the negative dissipation term in \eqref{time_evolution_of_nep-K+_Lp}. Now applying H\"{o}lder's inequality, Young's inequality and Gronwall inequality to \eqref{time_evolution_of_nep-K+_Lp}, we have that
\begin{align*}
|(n^\ep-K)_+|_{L^p}(t)\leq C(T)<\infty,\quad t\in[0,T], p\in (1,\infty).
\end{align*}
Applying standard argument, see e.g., \cite{BDP2006} proof of Proposition 3.3, we obtain the estimate
\begin{equation}\label{Lp_estimate_of_n_ep}
|n^\ep|_{L^\infty([0,T];L^p)}\leq C(T),\quad p\in(1,\infty).
\end{equation}
In particular, we set $p=2$ and obtain that
\begin{equation}\label{L2L2_estimate_of_n_ep}
|n^\ep|_{L^2([0,T];L^2)}\leq C(T).
\end{equation}
We conclude this step by setting $H:= L^2(\rr^2)$.

\textbf{Space $V$: Bound on $|\na n^\ep |_{L^2([0,T]\times\rr^2)}$:}
First we calculate the time evolution of the quantity $|n^\ep|_2^2$:
\be\ba
\frac{d}{dt}\int |n^\ep|^2dx
=&-2\int |\na n^\ep|^2dx+2\int \na n^\ep \cdot \na c^\ep n^\ep dx-\int \na\cdot(\bb(n^\ep)^2)dx\\
=&-2\int |\na n^\ep|^2dx+2\int \na n^\ep \cdot \na c^\ep n^\ep dx.
\ea\ee
Now integrating in time, we obtain the estimate
\begin{align}
\int |n^\ep|^2&dx(T)-\int |n^\ep|^2dx(0)\nonumber\\
=&-2\int_0^T\int |\na n^\ep|^2dxdt+2 \int_0^T\int n^\ep (\na n^\ep\cdot\na c^\ep)dxdt\nonumber\\
\leq&-2\int_0^T\int |\na n^\ep|^2dxdt+2\left(\int_0^T\int |n^\ep\na c^\ep|^2dxdt\right)^{1/2}\left(\int_0^T\int |\na n^\ep|^2dxdt\right)^{1/2}\label{na n L2 tx}
\end{align}
The terms on the left hand side of \eqref{na n L2 tx} are bounded due to \eqref{Lp_estimate_of_n_ep}. For the last term on the right hand side, we can estimate it as follows. The Hardy-Littlewood-Sobolev inequality yields
\bel\label{na c L4 control}
|\na c^\ep|_4\leq C|n^\ep |_{4/3},
\eel
which implies
\be
|n^\ep \na c^\ep |_2\leq |n^\ep |_{4}|\na c^\ep |_4\leq C|n^\ep |_{4} |n^\ep|_{4/3}.
\ee
Combining this and the $L^p$ bound \eqref{Lp_estimate_of_n_ep} yields the boundedness of $n^\ep\na c^\ep $ in $L^\infty([0,T],L^2)$. Applying this fact and \eqref{Lp_estimate_of_n_ep} in \eqref{na n L2 tx} and set $X=(\int_0^T\int |\na n^\ep|^2dxdt)^{1/2}$, we obtain
\be
X^2-2|n^\ep\na c^\ep|_{L^2((0,T)\times \rr^2)} X\leq \int |n^\ep|^2dx(T)-\int |n^\ep|^2dx(0)\leq C.
\ee
As a result,
\begin{equation}\label{na_nep_L2L2}
|\na n^\ep|_{L^2([0,T]\times\rr^2)}\leq C.
\end{equation}

Same as in \cite{BCM2008}, we set $V:=H^1(\rr^2)\cap \{n||x|n^2\in L^1\}$, which is shown to be compactly imbedded in $H$ there. Thanks to the bound \eqref{L2L2_estimate_of_n_ep}, \eqref{na_nep_L2L2} and the \eqref{V bound}, we have that $n^\ep\in L^2([0,T];V)$.

\textbf{Space $W$: Bound for the $\pa_t n^\ep$:} In order to estimate the $L^2([0,T];H^{-1})$ norm of the function $\pa_t n^\ep$, we first need to get a bound on the fourth moment of the solution $V_4:=\int n( x_1^4+x_2^4)dx$. The time evolution of $V_4$ can be formally estimated as follows:
\begin{align}
\frac{d}{dt}V_4=&\int (\de n-\na \cdot (\na cn)-\bb\na n)(x_1^4+x_2^4)dx\nonumber\\
=&12\int n |x|^2dx-\frac{1}{4\pi}\iint\frac{4(x_1^3-y_1^3)(x_1-y_1)+4(x_2^3-y_2^3)(x_2-y_2)}{|x-y|^2}n(x)n(y)dxdy\nonumber\\
 &+\int \bb n\cdot (4x_1^3, 4x_2^3)dx\nonumber\\
\leq& C(M)\int n|x|^2dx +C\int n(x_1^4+x_2^4)dx.
\end{align}
Combining the second moment estimate \eqref{V bound} and the Gronwall inequality, we have that
\begin{equation}
\int n(x,t) |x|^4dx\leq C, \enskip \quad t\in [0,T].\label{V4 control}
\end{equation}
One can adapt this calculation to the regularized solution $n^\ep$ without much difficulty. We leave the details to the interested reader.

Now we can estimate the $L^2([0,T]; H^{-1})$ norm of the $\pa_t n^\ep$. Combining the $L^p$ bound on $n^\ep$ \eqref{Lp_estimate_of_n_ep}, the bound on $\na c$ \eqref{na c L4 control} and the fourth moment control \eqref{V4 control} and testing the equation \eqref{rPKS} with $f\in L^2([0,T],H^1(\rr^2))$, we have  \myr{DISPLAY below...}
\begin{align*}
\langle \partial_t n^\epsilon, f\rangle_{L^2([0,T]\times\rr^2)}\leq &|\na n^\epsilon|_{L^2([0,T]; L^2)}|f|_{L^2([0,T]; H^1)}+|\na c^\epsilon n^\epsilon|_{L^2([0,T];L^2)}|f|_{L^2([0,T]; H^1)}\\
&+|\bb n^\epsilon|_{L^2([0,T];L^2)}|f|_{L^2([0,T]; H^1)}\\
\leq&|\na n^\epsilon|_{L^2([0,T]; L^2)}|f|_{L^2([0,T]; H^1)}+T^{1/2}|\na c^\epsilon|_{L^\infty([0,T]; L^4)}| n^\epsilon|_{L^\infty([0,T];L^4)}|f|_{L^2([0,T]; H^1)}\\
&+CT^{1/2}\sup_t V_4^{1/4}|n^\epsilon|_{L^\infty([0,T];L^3)}^{3/4}|f|_{L^2([0,T]; H^1)}\\
\leq&C|f|_{L^2([0,T]; H^1)}.
\end{align*}
As a result, we have that $\pa_t n^\ep$ is uniformly bounded in $L^2([0,T]; H^{-1})$.

Combining the results from all the steps above and the Aubin-Lions lemma, we have that $(n^\ep)_\ep$ is precompact in $L^2([0,T];L^2)$. We denote $n$ as the limit of one converging subsequence $(n^{\ep_k})_{\ep_k}$. Moreover, combining \eqref{na c L4 control} and the $L^{4/3}([0,T]\times\rr^2)$ bound on $n^\ep$ which can be derived from \eqref{Lp_estimate_of_n_ep}, we have that $n^{\ep_k} \na c^{\ep_k}$ converge to $n\na c$ in distribution sense.

\textbf{Step 3-} Free energy estimates.
By the convexity of the functional $n\rightarrow \int_{\rr^2}|\na \sqrt{n}|^2dx$, the fact that $n^{\ep_k} \na c^{\ep_k}$ converge to $n\na c$ in distribution sense and weak semi-continuity, we have

\be\ba
\iint_{[0, T]\times \rr^2}|\na\sqrt{n}|^2dxdt\leq&\liminf_{k\ra \infty}\iint_{[0, T]\times\rr^2}|\na\sqrt{n^{\ep_k}}|^2dxdt,\\
\iint_{[0, T]\times \rr^2}n|\na c|^2dxdt\leq&\liminf_{k\ra \infty}\iint_{[0, T]\times\rr^2}n^{\ep_k}|\na c^{\ep_k}|^2dxdt.
\ea\ee
Moreover, it can be checked that $S[n^\ep](t)\rightarrow S[n](t)$ for almost every t, whose proof is similar to the one used in \cite{BDP2006} Lemma 4.6.

Next we show the free energy estimate (\ref{free energy estimate}) using the strong convergence of $\{n^{\ep_k}\}$ in $L^2([0,T]\times \rr^2)$. The key is to show the following entropy dissipation term is lower semi-continuous for the sequence $(n^{\ep_k})$:

\begin{align}
\int_0^T\int n^{\ep_k}&|\na \log n^{\ep_k}-\na c^{\ep_k} -\bb|^2dxdt\nonumber\\
=&4\iint_{[0, T]\times \rr^2}|\na\sqrt{n^{\ep_k}}|^2dxdt+\iint_{[0, T]\times \rr^2}n^{\ep_k}|\na c^{\ep_k}|^2dxdt+\iint_{[0, T]\times \rr^2}n^{\ep_k}|\bb|^2dxdt\nonumber\\
&-2\iint_{[0, T]\times \rr^2} (n^{\ep_k})^2dxdt-2\iint_{[0, T]\times \rr^2} n^{\ep_k} \bb\cdot \na c^{\ep_k} dxdt\nonumber\\
=:&\T_1+\T_2+\T_3+\T_4+\T_5.\label{free energy dissipation}
\end{align}
For the sake of simplicity, later we use $n^\ep$ to denote $n^{\ep_k}$.

First, we estimate the $\T_3$ term in \eqref{free energy dissipation}.
By the Fatou Lemma, we have the following inequality:
\bel\label{Fatou}
\iint_{[0,T]\times \rr^2}n|\bb|^2dxdt\leq \liminf_{\ep_k\rightarrow 0}\iint_{[0,T]\times \rr^2}n^{\ep_k}|\bb|^2dxdt.
\eel
This finises the treatment of $\T_3$.

Next, we show that the term $\T_5$ in \eqref{free energy dissipation} actually converges as $\ep_k\rightarrow 0$. We decompose the difference between $\T_5$ and its formal limit into two parts
\begin{align}
\bigg|\int_0^T\int_{\rr^2}&\bb\cdot \na c n-\bb\cdot \na c^\ep n^\ep dxdt\bigg|\nonumber\\
\leq &\int_0^T\int_{\rr^2}|\na c \cdot \bb n -\na c^\ep\cdot \bb n+n \bb\cdot\na c^\ep-\bb\cdot \na c^\ep n^\ep|dxdt\nonumber\\
\leq &\int_0^T\int_{\rr^2}|\na c-\na c^\ep|\cdot|\bb n|dxdt+\int_0^T\int_{\rr^2}|\bb n-\bb n^\ep|\cdot|\na c^\ep|dxdt\nonumber\\
=:& \T_{51}+\T_{52}.\label{T 5 1 T 5 2}
\end{align}

For the first term $T_{51}$ in \eqref{T 5 1 T 5 2}, applying H\"{o}lder and Hardy-Littlewood-Sobolev inequality, we can estimate it as follows
\be\ba
\T_{51}\leq & \bigg(\int_0^T|\na(c-c^\ep)|_{4}^4dt\bigg)^{1/4}\bigg(\int_0^T\int_{\rr^2}|b\sqrt{n}|^2dxdt\bigg)^{1/2}\bigg(\int_0^T\int_{\rr^2}\sqrt{n}^4dxdt\bigg)^{1/4}\\
\leq & C\bigg(\int_0^T|n-n^\ep|_{{4/3}}^4dt\bigg)^{1/4}\bigg(\int_0^T\int_{\rr^2}|x|^2ndxdt\bigg)^{1/2}\bigg(\int_0^T\int_{\rr^2}n^2dxdt\bigg)^{1/4}\\
\leq & C\bigg(\int_0^T|n-n^\ep|_{2}^2\, |n-n^\ep|_{1}^2dt\bigg)^{1/4}\bigg(\int_0^T\int_{\rr^2}|x
|^2ndxdt\bigg)^{1/2}\bigg(\int_0^T\int_{\rr^2}n^2dxdt\bigg)^{1/4}\\
\leq & C\bigg(\int_0^T|n-n^\ep|_{2}^2dt\bigg)^{1/4}\sup_t|n-n^\ep|_{1}^{1/2}\bigg(\int_0^T\int_{\rr^2}n|x
|^2dxdt\bigg)^{1/2}\bigg(\int_0^T\int_{\rr^2}n^2dxdt\bigg)^{1/4}.
\ea\ee
From the $L^2([0,T]\times \rr^2)$ strong convergence of $(n^\ep)_\ep$, we have that the first factor goes to zero. Other factors are bounded due to \eqref{V bound}, Fatou's Lemma, $L^1$ bound on $n^\ep, n$ and $n\in L^2([0,T]\times\rr^2)$. As a result, $T_{51}$ converges to zero.
Next we estimate the $\T_{52}$ term in \eqref{T 5 1 T 5 2}. Applying the fact that $|\sqrt{|c|}-\sqrt{|a|}|\leq \sqrt{|c-a|}$, the H\"{o}lder and Hardy-Littlewood-Sobolev inequality, we have
\be\ba
\T_{52}
=&\int_0^T\int_{\rr^2}|\bb(\sqrt{n}^2-\sqrt{n^\ep}^2)| \cdot|\na c^\ep|dxdt\\
\leq & \int_0^T\int_{\rr^2}|\bb\sqrt{n}(\sqrt{n}-\sqrt{n^\ep})|\cdot|\na c^\ep|dxdt+ \int_0^T\int_{\rr^2}|\bb\sqrt{n^\ep}(\sqrt{n}-\sqrt{n^\ep})|\cdot |\na c^\ep|dxdt\\
\leq &|\na c^\ep|_{L^4([0,T]\times \rr^2)}|b\sqrt{n}|_{L^2([0,T]\times\rr^2)}\bigg(\int_0^T\int_{\rr^2}\sqrt{|n-n^\ep|}^4dxdt\bigg)^{1/4}\\
&+|\na c^\ep|_{L^4([0,T]\times \rr^2)}|b\sqrt{n^\ep}|_{L^2([0,T]\times\rr^2)}(\int_0^T\int_{\rr^2}\sqrt{|n-n^\ep|}^4dxdt)^{1/4}\\
\leq &C\sup_{t}{|n^\ep|_{L^1(\rr^2)}}^{1/2}|n^\ep|^{1/2}_{L^2([0,T]\times\rr^2)}\bigg(\int_0^T\int (n^\ep+n)|x|^2dxdt\bigg)^{1/2}|n-n^\ep|_{L^{2}([0,T]\times\rr^2)}^{1/2}
\ea\ee
By the same reasoning as in $T_{51}$, we have that this term goes to zero. This finishes the treatment for the term.

Now for the $\T_1, \T_2,\T_4$ terms in (\ref{free energy dissipation}), we can handle them in the same way as in \cite{BDP2006}. Combining all the estimates above we have that
\begin{equation}
\int_0^T\int n|\na \log n-\na c -\bb|^2dxdt\leq
\liminf_{\ep_k\rightarrow0}\int_0^T\int n^{\ep_k}|\na \log n^{\ep_k}-\na c^{\ep_k} -\bb|^2dxdt
\end{equation}

The remaining part of the proof is the same as the one in \cite{BDP2006}.
\end{proof}

\subsection{Proof of proposition \ref{pro 1.2}}
The proof of the proposition follows along the lines of \cite{BCM2008}.

\medskip\noindent
{\textbf{Acknowledgment.} Research was supported by NSF grants DMS16-13911, RNMS11-07444 (KI-Net), ONR grant N00014-1812465 ONR and the Sloan research fellowship FG-2015-66019.
 SH thanks the Ann G. Wylie Dissertation Fellowship and
 Jacob Bedrossian and Scott Smith for many fruitful discussions.
 We thank  ETH Institute for Theoretical Studies (ETH-ITS) for the support and hospitality. 


\begin{thebibliography}{99}

\bibitem[BCZ2015]{BCZ2015}
J. Bedrossian  and M. Coti Zelati, Enhanced dissipation, hypoellipticity, and anomalous small noise inviscid limits in shear flows, ArXiv:1510.08098, 2015.

\bibitem[BH2016]{BH2016}
     J. Bedrossian, S. He, Suppression of blow-up in Patlak-Keller-Segel via shear flows, arXiv:1609.02866 2016.

\bibitem[BK2010]{BK2010},
    J. Bedrossian and I. Kim, Global existence and finite time blow-up for critical {Patlak-Keller-Segel} models with inhomogeneous diffusion, SIMA 45(3), 934--964, 2013.

 \bibitem[BM2014]{BM2014}
  J. Bedrossian and N. Masmoudi, Existence, Uniqueness and Lipschitz Dependence for Patlak-Keller-Segel and Navier-Stokes in $\rr^2$ with Measure-valued initial data, Arch. Rat. Mech. Anal, 214(3), 717--801, 2014.

\bibitem[Bil1995]{Bil1995}
  P. Biler, The Cauchy problem and self-similar solutions for a nonlinear parabolic equation,
Studia Math., 114(2):181–192, 1995.

\bibitem[BCC2008]{BCC2008}
  A. Blanchet, V. Calvez and J. A. Carrillo,  \emph{Convergence of the mass-transport steepest descent scheme
for the subcritical Patlak-Keller-Segel model}, SIAM J. Numer. Anal., 46 (2008), 691-721.

\bibitem[BDP2006]{BDP2006}
A. Blanchet and J. Dolbeault and B. Perthame,
Two-Dimensional {Keller-Segel} Model: Optimal Critical Mass and Qualitative Properties of the Solutions,  EJDE 44, 1--32, 2006.

\bibitem[BCM2008]{BCM2008}
  A. Blanchet, J. A. Carrillo and N. Masmoudi, Infinite Time Aggregation for the Critical Parlak-Keller-Segel model in $\rr^2$, Comm. Pure Appl. Math., \textbf{61} 1449-1481, 2008.

\bibitem[CL92]{CL92}
  E. Carlen and M. Loss, \emph{
   Competing symmetries, the logarithmic {HLS} inequality and {Onofri's} inequality on {$\mathbb S^n$}}, Geom. Func. Anal.(1992) 2: 90. https://doi.org/10.1007/BF01895706, 90-104.

\bibitem[CC2006]{CC2006}
V. Calvez and J.A. Carrillo, \emph{Volume effects in the {Keller-Segel} model: energy estimates preventing blow-up}, JMPA, (2006), volume 86, pages 155-175.

\bibitem[CC2008]{CC2008}
V. Calvez  and L. Corrias, The parabolic-parabolic {K}eller-{S}egel model in {$\mathbb R^2$},
Commun. Math. Sci., 6(2) 417-447, 2008.

\bibitem[CCLW2012]{CCLW2012}
  J. A. Carrillo, L. Chen, J.-G. Liu and J. Wang, \emph{A Note on the Subcritical Two Dimensional Keller-Segel System}, Acta Appl Math, V. 119, Issue 1, (2012), 43-55.

\bibitem[CR2010]{CR2010}
{J.A. Carrillo and J. Rosado},
\emph{Uniqueness of bounded solutions to Aggregation equations by optimal transport methods},
 Proc. 5th Euro. Congress of Math. Amsterdam, (2008), European Mathematical Society.

\bibitem[Del1991]{Del1991}
  J.M. Delort, \emph{Existence de nappes de tourbillon en dimension deux.} J. Am. Math. Soc. 4, 386–553 (1991)

\bibitem[DS2009]{DS2009}
  J. Dolbeault and C. Schmeiser, \emph{The two-dimensional Keller-Segel model after blow-up}, Discrete and Continuous Dynamical Systems, 25 (2009), pp. 109-121.

\bibitem[DLM2010]{DLM2010}
R.-J. Duan, A. Lorz and P. Markowich,
\emph{Global Solutions to the Coupled Chemotaxis–Fluid Equations}, Comm. Partial Differential Equations, Vol. 35, (2010), page 1635-1673.

\bibitem[EM2016]{EM2016}
G. Egaña and S. Mischler, \emph{Uniqueness and long time asymptotic for the Keller-Segel equation: the parabolic-elliptic case}, Arch. Ration. Mech. Anal. 220 (2016), no. 3, 1159-1194.


\bibitem[FLM2010]{FLM2010}
M. Di Francesco, A. Lorz and P. Markowich, \emph{Chemotaxis–fluid coupled model for swimming bacteria with nonlinear diffusion: global existence and asymptotic behavior}, Discrete Contin. Dyn. Syst. Ser. A, Vol. 28, (2010), page 1437-1453.


\bibitem[Hor2003]{Hor2003}
D. Hortsmann, From 1970 until present: the Keller-Segel model in chemotaxis and its consequences, I, Jahresber. Deutsch. Math.-Verein,
  105(3), 103--165, 2003.

\bibitem[JL1992]{JL1992}
{W. J\"ager and S. Luckhaus}, \emph{On Explosions of Solutions to a System of Partial Differential Equations Modelling Chemotaxis}, Trans. Am.
Math. Soc., (1992), volume 329, pages 819-824.


\bibitem[KS1970]{KS1970}
  E. F. Keller and L. A. Segel, \emph{Initiation of slide mold aggregation viewed as an instability}, J. Theor. Biol., 26 (1970).

\bibitem[KJCY2017]{KJCY2017}
  S. Khan, J. Johnson, E. Cartee, and Y. Yao, \emph{Global Regularity of Chemotaxis Equations with Advection}, to appear in Involve.

\bibitem[KX2015]{KX2015}
A. Kiselev  and X. Xu, {Suppression of chemotactic explosion by mixing},
Archive for Rational Mechanics and Analysis, 1-36, 2015.

\bibitem[LL2011]{LL2011}
J.-G. Liu and A. Lorz, \emph{A coupled chemotaxis-fluid model: global existence}, Annales de l’Institut Henri Poincar\'{e}. Analyse Non Lin\'{e}aire, Vol. 28, (2011), page 643-652.

\bibitem[L2010]{L2010}
A. Lorz, \emph{Coupled chemotaxis fluid model}, Math. Models Methods Appl. Sci., Vol. 20,
 (2010), page 987-1004.

\bibitem[L2012]{L2012}
A. Lorz, \emph{A coupled Keller-Segel-Stokes model: global existence for small initial data and blow-up delay}, Communications in Mathematical Sciences, Vol. 10, (2012), page 555-574.

\bibitem[Pat1953]{Pat1953}
  C. S. Patlak, Random walk with persistence and external bias, Bull. Math. Biophys., 15
(1953), pp. 311–338.


\bibitem[Po2002]{Po2002}
  F. Poupaud, \emph{Diagonal defect measures, adhesion dynamics and Euler
equations}, Meth. Appl. Anal. 9 (2002), pp. 533-561.

\bibitem[TS2001]{TS2001}
  Takasi Senbaa, Takashi Suzuki, \emph{Weak Solutions to a Parabolic-Elliptic System of Chemotaxis} J. Functional. Analysis 47 (2001), 17-51.


\end{thebibliography}
\end{document}